\numberwithin{equation}{section}
\newtheoremstyle{thmlemcorr}{10pt}{10pt}{\itshape}{}{\bfseries}{.}{10pt}{{\thmname{#1}\thmnumber{ #2}\thmnote{ (#3)}}}
\newtheoremstyle{thmlemcorr*}{10pt}{10pt}{\itshape}{}{\bfseries}{.}\newline{{\thmname{#1}\thmnumber{ #2}\thmnote{ (#3)}}}
\newtheoremstyle{remexample}{10pt}{10pt}{}{}{\bfseries}{.}{10pt}{{\thmname{#1}\thmnumber{ #2}\thmnote{ (#3)}}}
\newtheoremstyle{ass}{10pt}{10pt}{}{}{\bfseries}{.}{10pt}{{\thmname{#1}\thmnumber{ A#2}\thmnote{ (#3)}}}
\theoremstyle{thmlemcorr}
\newtheorem{theorem}{Theorem}
\numberwithin{theorem}{section}
\newtheorem{corollary}[theorem]{Corollary}
\newtheorem{proposition}[theorem]{Proposition}
\newtheorem{definition}[theorem]{Definition}
\theoremstyle{thmlemcorr*}
\newtheorem{theorem*}{Theorem}
\newtheorem{lemma*}[theorem]{Lemma}
\newtheorem{corollary*}[theorem]{Corollary}
\newtheorem{proposition*}[theorem]{Proposition}
\newtheorem{problem*}[theorem]{Problem}
\newtheorem{conjecture*}[theorem]{Conjecture}
\newtheorem{definition*}[theorem]{Definition}
\theoremstyle{remexample}
\newtheorem{remark}[theorem]{Remark}
\theoremstyle{ass}
\newcommand{\Crm}{\mathrm{C}}
\newcommand{\Lrm}{\mathrm{L}}
\newcommand{\Wrm}{\mathrm{W}}
\newcommand{\Acal}{\mathcal{A}}
\newcommand{\Fcal}{\mathcal{F}}
\newcommand{\Lcal}{\mathcal{L}}
\newcommand{\Rcal}{\mathcal{R}}
\newcommand{\Mbf}{\mathbf{M}}
\renewcommand{\Bbb}{\mathbb{B}}
\newcommand{\Nbb}{\mathbb{N}}
\newcommand{\Rbb}{\mathbb{R}}
\DeclareMathOperator{\supp}{supp}
\newcommand{\ee}{\mathrm{e}}
\newcommand{\setn}[2]{\{\, #1 \ \ \textup{\textbf{:}}\ \ #2 \,\}}
\newcommand{\setb}[2]{\bigl\{\, #1 \ \ \textup{\textbf{:}}\ \ #2 \,\bigr\}}
\newcommand{\norm}[1]{\|#1\|}
\newcommand{\abs}[1]{|#1|}
\newcommand{\absBB}[1]{\biggl|#1\biggr|}
\newcommand{\dd}{\;\mathrm{d}}
\newcommand{\N}{\mathbb{N}}
\newcommand{\R}{\mathbb{R}}
\newcommand{\ONE}{\mathbbm{1}}
\newcommand{\toweak}{\rightharpoonup}
\newcommand{\toup}{\uparrow}
\newcommand{\sbullet}{\begin{picture}(1,1)(-0.5,-2.5)\circle*{2}\end{picture}}
\newcommand{\frarg}{\,\sbullet\,}
\newcommand{\toY}{\overset{\mathrm{Y}}{\to}}
\newcommand{\lrangle}[1]{\langle #1 \rangle}
\newcommand{\term}[1]{\textbf{#1}}
\newcommand{\proofstep}[1]{\textit{#1}}
\def\Xint#1{\mathchoice 
{\XXint\displaystyle\textstyle{#1}}%
{\XXint\textstyle\scriptstyle{#1}}%
{\XXint\scriptstyle\scriptscriptstyle{#1}}%
{\XXint\scriptscriptstyle\scriptscriptstyle{#1}}%
\!\int} 
\def\XXint#1#2#3{{\setbox0=\hbox{$#1{#2#3}{\int}$} 
\vcenter{\hbox{$#2#3$}}\kern-.5\wd0}} 
\def\dashint{\,\Xint-}
\newcommand{\restrict}{\begin{picture}(10,8)\put(2,0){\line(0,1){7}}\put(1.8,0){\line(1,0){7}}\end{picture}}
\renewcommand{\epsilon}{\varepsilon}
\renewcommand{\phi}{\varphi}
\begin{document}


\title[Differential inclusions and Young measures involving prescribed Jacobians]{Differential inclusions and Young measures involving prescribed Jacobians}

\author{Konstantinos Koumatos}
\address{\textit{Konstantinos Koumatos:} Mathematical Institute, University of Oxford, Andrew Wiles Building, Radcliffe Observatory Quarter, Woodstock Road, Oxford OX2 6GG, United Kingdom.}
\email{koumatos@maths.ox.ac.uk}

\author{Filip Rindler}
\address{\textit{Filip Rindler:} Mathematics Institute, University of Warwick, Coventry CV4 7AL, United Kingdom, United Kingdom.}
\email{F.Rindler@warwick.ac.uk}

\author{Emil Wiedemann}
\address{\textit{Emil Wiedemann:} Hausdorff Center for Mathematics and Mathematical Institute, Universit\"{a}t Bonn, Endenicher Allee 60, 53115 Bonn, Germany.}
\email{emil.wiedemann@hcm.uni-bonn.de}


\hypersetup{
  pdfauthor = {Konstantinos Koumatos (University of Oxford) and Filip Rindler (University of Warwick) and Emil Wiedemann (University of British Columbia and PIMS)},
  pdftitle = {Differential inclusions and Young measures involving prescribed Jacobians},
  pdfsubject = {MSC (2010): 49J45 (primary); 28B05, 46G10},
  pdfkeywords = {Gradient Young measure, convex integration, positive Jacobian, incompressibility, laminates}
}


\maketitle


\begin{abstract}
This work presents a general principle, in the spirit of convex integration, leading to a method for the characterization of Young measures generated by gradients of maps in $\Wrm^{1,p}$ with $p$ less than the space dimension, whose Jacobian determinant is subjected to a range of constraints. Two special cases are particularly important in the theories of elasticity and fluid dynamics: (a) the generating gradients have positive Jacobians that are uniformly bounded away from zero and (b) the underlying deformations are incompressible, corresponding to their Jacobian determinants being constantly one. This characterization result, along with its various corollaries, underlines the flexibility of the Jacobian determinant in subcritical Sobolev spaces and gives a more systematic and general perspective on previously known pathologies of the pointwise Jacobian. Finally, we show that, for $p$ less than the dimension, $\Wrm^{1,p}$-quasiconvexity and $\Wrm^{1,p}$-orientation-preserving quasiconvexity are both unsuitable convexity conditions for nonlinear elasticity where the energy is assumed to blow up as the Jacobian approaches zero.
\vspace{4pt}

\noindent\textsc{MSC (2010): 49J45 (primary); 28B05, 46G10.} 

\noindent\textsc{Keywords:} Gradient Young measure, convex integration, positive Jacobian, incompressibility, laminates.

\vspace{4pt}

\noindent\textsc{Date:} \today{} 
\end{abstract}

%

\section{Introduction}

In this work we continue the investigation started in~\cite{KoRiWi13OPYM} into the positive Jacobian constraint in the Calculus of Variations. There, using a convex integration-type argument, we characterized all Young measures generated by sequences in $\Wrm^{1,p}(\Omega;\R^d)$, where $\Omega \subset \R^d$ is a bounded open set and $p<d$, with the property that every element of the sequence has positive Jacobian almost everywhere. 

Here we extend this characterization to more restrictive pointwise constraints on the Jacobian determinant, e.g.\ the condition that it be bounded below by a positive constant or even be equal to a given positive constant almost everywhere. These requirements are very natural in elasticity theory, where they correspond to limited compressibility or incompressibility of an elastic solid. 

On a more theoretical level, our characterization and its various corollaries display the vast flexibility of the pointwise Jacobian determinant in Sobolev spaces $\Wrm^{1,p}(\Omega;\R^d)$ below the critical exponent $p=d$. While it is well-known \cite{Ball77CCET, BalMur84WQVP,Sver88RPDF,Mull90DETD,Mull93SSDD,Hen11SobHomJ0} that the Jacobian loses many of its usual geometric properties for $p<d$, thus leading to the failure of weak continuity or of the change-of-variables formula (i.e., in terms of elasticity theory, to cavitation), one of our aims in this work is to systematize and generalize these observations within a convex integration framework.

We refer to Sections~\ref{statement} and~\ref{sc:applications} below for a precise formulation of our results. Before that, however, we wish to give an informal discussion of our findings, highlighting various different aspects. 

\subsection{Kinderlehrer--Pedregal theory}
It is a recurrent theme in the Calculus of Variations to obtain characterization results for Young measures generated by sequences of maps with specific properties. The prototypical result is that of Kinderlehrer--Pedregal~\cite{KinPed91CYMG,KinPed94GYMG}, which applies to sequences of gradients. Various generalizations have been studied, e.g.\ to so-called $\mathcal{A}$-free sequences~\cite{FonMul99AQLS} or generalized Young measures involving concentrations~\cite{FoMuPe98ACOE,KriRin10CGGY,Rind14LPCY}. An additional difficulty is posed by requiring that the generating sequence satisfies not only a linear differential constraint (like the gradient constraint), but also a nonlinear and nonconvex pointwise constraint. Such a problem was treated in~\cite{SzeWie12YMGI}, where the constraint was related to the incompressible Euler equations, and in~\cite{KoRiWi13OPYM}, where the Jacobian determinant was required to be positive almost everywhere. This article presents a significant extension of the latter result, see Theorem~\ref{thm:main_intro} below. Note, however, that~\cite{KoRiWi13OPYM} is not strictly contained in the present work as the side constraint is \emph{open} in \emph{loc.\ cit.} and \emph{closed} here.

\subsection{First-order PDEs}  
A corollary of our characterization (Theorem~\ref{damo}) is an existence statement for Dirichlet problems of the form 
\begin{equation}\label{damointro} 
\left\{
\begin{aligned}
\det\nabla v(x)&=J(x),\\
u|_{\partial\Omega}&=g.
\end{aligned}
\right.
\end{equation}
This problem was first stated in this form by B. Dacorogna and J. Moser~\cite{DacMos90PDEJ}, motivated by earlier work of Moser~\cite{Mose65OVEM} on diffeomorphisms between volume forms on manifolds. They answered the existence question positively provided $g=\operatorname{id}$ and $J$ is positive, lies in $C^{k,\alpha}$, and satisfies a compatibility condition. Their solution $v$ then is a $C^{k+1,\alpha}$-diffeomorphism. When the positivity assumption on $J$ is dropped or different boundary conditions are considered, similar results are available~\cite{CuDaKn09OENS, Kneu12OEPB}, but then $v$ may no longer be chosen as a diffeomorphism. For a similar result and a discussion of this problem in Sobolev spaces we refer the reader to \cite{Ye94PJSP}.

Here, in Section~\ref{ssc:DM}, we establish the following result: If $g\in \Wrm^{1-1/p,p}(\partial\Omega)$ for some $1 < p<d$ and $J\in\Lrm^{p/d}(\Omega)$ is measurable, then there exists a solution $v\in \Wrm^{1,p}(\Omega)$ of~\eqref{damointro}. The fact that our result requires no compatibility condition on $J$ and $g$ underscores the pathological behaviour of the Jacobian for $p<d$ and the loss of its classical geometric properties.

\subsection{The distributional determinant}
The properties of the pointwise determinant of matrix-valued maps in $\Lrm^p$, $p<d$, led to the definition of the \term{distributional determinant}~\cite{Ball77CCET}, which may no longer be defined as a function, but only as a distribution. In~\cite{Mull93SSDD} examples were constructed of maps for which the difference of the distributional and the pointwise determinant is supported on sets of arbitrary Hausdorff dimension $\alpha\in(0,d)$. We also exhibit in the present paper, by completely different methods, examples of maps whose distributional and pointwise determinants differ (\emph{any} solution of~\eqref{damointro} with $g=\operatorname{id}$ and $\int_{\Omega}J(x) \dd x \neq |\Omega|$ will have this property).

Of course, our results do not answer the intriguing problem~\cite{Mull93SSDD} under what conditions~\eqref{damointro} can be solved in $\Wrm^{1,p}$ if one replaces the pointwise determinant by the distributional one. In fact it would be interesting to know what can be said about the distributional Jacobian of the maps that we construct.

\subsection{Cavitation}
A related phenomemon in elasticity theory is \term{cavitation}~\cite{Ball82DESC,Sver88RPDF,MuSp95EX,SivSp00EX,HeMo-Co10CAV&FRAC}, which refers to the formation of holes in an elastic solid. Consider the problem~\eqref{damointro} with $\Omega=B_1(0)$ (the unit ball in $\mathbb{R}^d$), $J\equiv1$, and $g(x)=2x$. If an elastic solid is to be deformed according to this data, the deformation necessarily has to be discontinuous, thus exhibiting cavitation. Since our convex integration construction is in a sense local and does not distinguish particular points in the domain, the discontinuous solutions in $W^{1,p}$ produced in this paper include a kind of ``diffuse cavitation''.

A further consequence of these observations in conjunction with~\cite{Sver88RPDF} is that for the maps we construct, the cofactor matrix $\operatorname{cof}\,\nabla v$, which is easily seen to be in $\Lrm^{p/(d-1)}$, cannot be expected to lie in general in $\Lrm^q$ for any $q\geq p/(p-1)$. 

\subsection{Relaxation}
We prove a relaxation theorem (Corollary~\ref{thm:relaxation} below) under the constraint that the gradients of admissible maps have determinants greater or equal to $r>0$, or determinants precisely equal to $r$, almost everywhere. This follows immediately from our results. No relaxation results under these constraints seem to exist in the literature. We note~\cite{AnHMan09RTNE} where a relaxation theorem is proved for $p\in(1,\infty)$ under the assumption that the integrand $f$ satisfies $f(A)\to\infty$ as $\det\, A\to 0^+$, nevertheless without accounting for the requirement that $f(A)=\infty$ if $\det A\leq 0$ which is natural in elasticity. A very interesting relaxation result was also recently proved in \cite{ContiDolz14} for functionals relevant in elasticity theory and $p\geq d$. Of course it would be very interesting to find similar relaxation results with the pointwise Jacobian replaced by the distributional one.

\subsection{Weak continuity of the determinant}
It is well-known that if $u_j \toweak u$ in $\Wrm^{1,p}$ with $p\geq d$, then $\det\nabla u_j\toweak \det\nabla u$ in the sense of distributions, whereas this weak continuity property may fail for $p<d$ (see e.g.~\cite{BalMur84WQVP,FoLeMa05WCLS} and the references therein). This is again related to the discrepancy between the pointwise and the distributional determinant. In fact, for $p<d$, it is shown in \cite[Ex.~3, p.~284]{GMSvol1} that the map $u(x)=x$ can be approximated weakly in $\Wrm^{1,p}$ by a sequence $(u_j)$ such that $\det\nabla u_j =0$ a.e., making the determinant weakly discontinuous in $\Wrm^{1,p}$. The same result can be extended to any smooth function $u$ (see \cite{DePhil12WJ}) and by density to $u\in \Wrm^{1,p}$, so that the determinant is weakly discontinuous everywhere in $\Wrm^{1,p}$.

In Corollary~\ref{cor:approximation} we strongly exhibit this everywhere discontinuity by showing that any $u\in\Wrm^{1,p}$ ($p<d$) can be approximated weakly in $\Wrm^{1,p}$ by a sequence of maps with Jacobian even prescribed almost everywhere.

\subsection{Lower Semicontinuity}
As a final application, in Section~\ref{sec:lsc} we make the perhaps surprising observation that, for $p<d$, neither the class of $\Wrm^{1,p}$-quasiconvex stored-energy functions, cf.~\cite{BalMur84WQVP}, nor the seemingly larger class of $\Wrm^{1,p}$-orientation-preserving quasiconvex functions are suitable for the minimization problems of nonlinear elastostatics under realistic growth assumptions. We accomplish this by showing that an integrand cannot be $\Wrm^{1,p}$-(orientation-preserving) quasiconvex and satisfy natural growth conditions at the same time. In particular, this essentially rules out that $\Wrm^{1,p}$-orientation-preserving quasiconvex functions can satisfy the condition
\[
  f(A) \to \infty  \quad\text{as}\quad  \det\, A \to 0^+\quad\mbox{ and }\quad f(A) = \infty  \quad\text{if}\quad  \det\, A \leq 0,
\]
which one imposes on realistic integrands in nonlinear elasticity, see~\cite{Ball02SOPE}. In this context, we remark that the energies are formulated in terms of the \emph{pointwise} Jacobian.

\subsection{Convex integration}     
Finally, we give some remarks on the method of proof of our results, which can be viewed as an instance of \term{convex integration}. In this general technique, one uses an iteration scheme which starts, in our case, from any map $u\in\Wrm^{1,p}$ and approaches the determinant constraint by adding suitable oscillatory perturbations at each step, the frequencies increasing rapidly from step to step. The crucial observation (Proposition~\ref{prop:geometry}) is that the \term{$p$-quasiconvex hull} (cf.~\cite{KoRiWi13OPYM}) of the set of matrices with given determinant is sufficiently large such as to provide for enough suitable perturbations. 

Convex integration has been used in a variety of situations in topology, differential geometry, nonlinear PDE, and the Calculus of Variations~\cite{Nash54C1IE,Grom86PDR,DacMar97GETH,EliMis02IH,Kirc03RGM,MulSve03CILM,AsFaSz08CILP,DeLSze12HEFD}. A common feature of these very different problems is that there exists a ``threshold regularity'' above which the situation is ``rigid'', whereas below the threshold the problem displays ``flexible'' behavior. For instance, the only $\Crm^2$ isometric embedding of $\mathbb{S}^2$ into $\mathbb{R}^3$ is the canonical embedding, whereas J.~Nash~\cite{Nash54C1IE} constructed infinitely many $\Crm^1$ embeddings with unexpected behavior. The loss of rigidity in this example is due to the lack of a well-defined curvature of the embedded submanifold. Another, more recent, example is given by the incompressible Euler equations~\cite{DeLSze12HEFD}. There, the kinetic energy is conserved (``rigidity'') for sufficiently regular solutions, but can become subject to dissipation (``flexibility'') for less regular solutions.

Similarly, the main thrust of this work entails that the Jacobian determinant is ``rigid'' in $W^{1,p}$ for $p\geq d$, yet becomes ``flexible'' and loses many of its classical properties for $p<d$, as showcased in the course of our previous discussion.

On a more technical level, our method allows one to distinguish via convex integration between different levels of Sobolev regularity (the only other results of this kind, as far as we are aware, are found in~\cite{AsFaSz08CILP} and the work of Yan~\cite{Yan96RemarksStability,Yan01LinearBVP,Yan03Baire}). Moreover, our convergence argument via Young measure generation (proof of Proposition~\ref{prop:convexint}) is new and may be helpful to facilitate future convex integration-type arguments.    

\subsection{Plan of the paper}
The plan of this paper is as follows: First, in Section~\ref{sc:setup}, we give a brief introduction to Young measures and introduce terminology. Section~\ref{statement} gives a precise formulation of the main characterization result. In Section~\ref{sc:convex_int}, we provide all the necessary definitions and present our convex integration principle (Proposition~\ref{prop:convexint}) leading to a general method (Theorem~\ref{thm:main_abstract}) for the characterization of Young measures generated by gradients that satisfy a differential inclusion of the form $\nabla u(x) \in S_{R(x,\frarg)}$ a.e., where $S_{R(x,\frarg)}$ is the zero-sublevel set of a Carath\'{e}odort function $R$. For this, we require a \enquote{tightness condition} on the $p$-quasiconvex hull of $S_{R(x,\frarg)}$ (see Definition~\ref{def:p-full}).

In Section~\ref{sc:geometry}, we restrict attention to constraint functions of the form $R(x,A)=\max\{J_1(x) - \det\, A, \det\, A - J_2(x), 0\}$ with corresponding sublevel set $S_{R(x,\frarg)} = \setb{A\in\R^{d\times d}}{J_1(x)\leq \det\, A \leq J_2(x)}$. We prove that the above sets satisfy the hypotheses of Theorem~\ref{thm:main_abstract} and the characterization of the corresponding gradient Young measures follows. For the convenience of the reader, the result is first proved in the physically most relevant dimension $d=3$. The case $d=2$ is significantly simpler and the proof is omitted, whereas the case $d>3$ is, at least notationally, more involved and is presented separately in Section~\ref{sc:arbitrary_dim}. Sections~\ref{sc:applications} and~\ref{sec:lsc} are devoted to the applications mentioned above.

\subsection*{Acknowledgments}

The authors wish to thank John Ball, Sergio Conti, Georg Dolzmann and Jan Kristensen for discussions related to the present paper. KK was supported by the European Research Council grant agreement ${\rm n^o}$ 291053. FR and EW were partly supported by a Royal Society International Exchange Grant IE131532.

\section{Setup}\label{sc:setup}

On the space $\R^{d \times d}$ of $(d \times d)$-matrices $M = (M^i_j)$ ($i,j = 1,\ldots,d$) we use the Frobenius norm
\[
  \abs{M} = \abs{M}_F := \left[ \sum_{i,j=1}^d (M^i_j)^2 \right]^{1/2}
  = \left[ \sum_{k=1}^d \sigma_k^2 \right]^{1/2},
\]
where $\sigma_k$, $k=1,\ldots,d$ are the singular values of $M$.

Let $1 \leq p < \infty$. A \term{$p$-Young measure} is a parametrized family $\nu = (\nu_x)_{x \in \Omega} \subset \Mbf^1(\R^N)$ of probability measures on $\R^N$, where $\Mbf^1(\R^N)$ denotes the space of probability measures, such that:
\begin{enumerate}
\item[(1)] The family $(\nu_x)$ is \term{weakly* measurable}, that is, for every Borel set $B \subset \R^N$ the map $x \mapsto \nu_x(B)$ is ($\Lcal^d \restrict \Omega$)-measurable.
\item[(2)] The map $x \mapsto \int \abs{A}^p \dd \nu_x$ lies in $\Lrm^1(\Omega)$.
\end{enumerate}
Many properties of Young measures are collected in~\cite{Pedr97PMVP}, we recall only some of them: The \term{barycenter} of a $p$-Young measure $\nu$ is
\[
  [\nu](x) := \int A \dd \nu_x(A),  \qquad x \in \Omega,
\]
and $[\nu] \in \Lrm^p(\Omega;\R^N)$. A Young measure $\nu$ is \term{homogeneous} if $x \mapsto \nu_x$ is an almost everywhere constant map, i.e.\ $\nu_x = \nu \in \Mbf^1(\R^N)$ for a.e.\ $x \in \Omega$.

We say that a (necessarily norm-bounded) sequence $(u_j) \subset \Lrm^p(\Omega;\R^N)$ \term{generates} the Young measure $\nu$ if 
\[
  \int_\Omega f(x,u_j(x)) \dd x  \;\;\to\;\;
  \int_\Omega \int f(x,A) \dd \nu_x(A) \dd x 
  \]
for all Carath\'{e}odory functions $f:\Omega\times\R^N\to \R$ (that is, $f$ is measurable in the first and continuous in the second argument) such that $(f(\frarg,u_j))$ is equiintegrable. We express generation in symbols as $u_j\toY \nu$.

It can be shown that if $(u_j)$ and $(v_j)$ are $\Lrm^p(\Omega)$-bounded sequences with $\norm{u_j-v_j}_p \to 0$ as $j\to\infty$ and $(u_j)$ generates the Young measure $\nu$, then also $(v_j)$ generates $\nu$. It can further be proved that all $p$-Young measures are generated by some sequence of uniformly $\Lrm^p(\Omega;\R^N)$-bounded functions.

\section{Statement of the main result}\label{statement}
We consider in this article \term{differential inclusions} of the form
\begin{equation}
\label{eq:constraint_intro}
  \nabla u(x) \in S_{R(x,\frarg)} \quad\text{a.e.,}\qquad
  \nabla u \in \Lrm^p(\Omega,\R^{d\times d}),
\end{equation}
where $S_{R(x,\frarg)}$ is the zero-sublevel set of $R(x,\frarg)$ for a Carath\'{e}odory \term{constraint function} $R \colon\Omega\times\R^{d \times d} \to \R$, i.e.
\[
S_{R(x,\frarg)} := \setb{A\in\R^{d\times d}}{R(x,A)\leq 0}.
\]

This principle generalizes some of the methods presented in~\cite{KoRiWi13OPYM} to arbitrary constraints of the form \eqref{eq:constraint_intro} satisfying certain properties (see Definition~\ref{def:p-full}).
As an application of the general principle, we provide a characterization of Young measures generated by gradients bounded in $\Lrm^p(\Omega,\R^{d\times d})$, $1<p<d$, and satisfying a constraint of the form
\begin{equation} \label{eq:det_r}
  J_1(x) \leq \det \nabla u_j(x) \leq J_2(x) \qquad\text{for all $j$ and a.e.\ $x\in\Omega$,}
\end{equation}
where
\[
  J_1 \colon \Omega \to [-\infty,+\infty), \qquad 
  J_2 \colon \Omega \to (-\infty,+\infty]
\]
are given functions such that
\[
  J_1(x)\leq J_2(x) \qquad\text{for a.e.~$x\in\Omega$.}
\]

This characterization gives rise to a number of special cases which are discussed after the statement of the main result:

\begin{theorem}\label{thm:main_intro}
Let $1 < p < d$. Suppose that $\Omega \subset \R^d$ is open and bounded, $|\partial\Omega|=0$, and let $\nu = (\nu_x)_{x\in\Omega} \subset \Mbf^1(\R^{d \times d})$ be a $p$-Young measure. Moreover let $J_1 \colon \Omega \to [-\infty,+\infty)$, $J_2 \colon \Omega \to (-\infty,+\infty]$ be measurable and such that $J_1(x)\leq J_2(x)$ for a.e.~$x\in\Omega$. Also, assume that for $i=1,2$,
\begin{equation*}
\int_{\Omega}J_1^+(x)^{p/d} \dd x<\infty
\qquad\text{and}\qquad
\int_{\Omega}J_2^-(x)^{p/d} \dd x<\infty,
\end{equation*} 
where $J_i^{\pm}$ denotes the positive or negative part of $J_i$, respectively. Then the following statements are equivalent:
\begin{itemize}
  \item[(i)] There exists a sequence of gradients $(\nabla u_j) \subset \Lrm^p(\Omega;\R^{d \times d})$ that generates $\nu$, such that 
\[
  \qquad J_1(x) \leq \det \nabla u_j(x) \leq J_2(x) \quad\text{for all $j\in\N$ and a.e.~$x\in\Omega$. }
\]
  \item[(ii)] The conditions (I)-(IV) hold:
\begin{itemize}
\item[(I)] $\displaystyle\int_{\Omega}\int\abs{A}^p \dd \nu_x(A)<\infty$;
\item[(II)] the barycenter $[\nu](x) := \int A \dd \nu_x(A)$ is a gradient, i.e.\ there exists $\nabla u \in \Lrm^p(\Omega;\R^{d \times d})$ with $[\nu] = \nabla u$ a.e.;
\item[(III)] for every quasiconvex function $h \colon \R^{d \times d} \to \R$ with $\abs{h(A)}\leq c(1+\abs{A}^p)$, the Jensen-type inequality
\[
  \qquad\qquad h(\nabla u(x)) \leq  \int h(A) \dd \nu_x(A)  \qquad\text{holds for a.e.\ $x \in \Omega$;}
\]
\item[(IV)] $\supp{\nu_x} \subset \setb{ A \in \R^{d \times d} }{ J_1(x) \leq \det\, A \leq J_2(x) }$ for a.e.\ $x \in \Omega$.
\end{itemize}
\end{itemize}
Furthermore, in this case the sequence $(u_j)$ can be chosen such that $(\nabla u_j)$ is $p$-equiintegrable\footnote{We say that a sequence $(\nabla u_j)$ is $p$-equiintegrable if the sequence $(|\nabla u_j|^p)$ is equiintegrable.} and $u_j - u\in\Wrm^{1,p}_0(\Omega,\mathbb{R}^d)$, where $u\in\Wrm^{1,p}(\Omega,\R^d)$ is the deformation underlying $\nu$ (i.e.\ the function whose gradient is the barycenter of $\nu$).
\end{theorem}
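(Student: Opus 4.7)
The plan is to establish the two implications separately, with essentially all the work concentrated in $(ii)\Rightarrow(i)$.

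For $(i)\Rightarrow(ii)$, I would rely on the classical Kinderlehrer--Pedregal characterization of $\Wrm^{1,p}$-gradient Young measures. Properties (I)--(III) are standard: (I) is immediate from $(\nabla u_j)$ being bounded in $\Lrm^p$, (II) follows from the weak $\Lrm^p$-convergence $\nabla u_j \toweak [\nu]$ together with the weak closedness of gradients, and (III) is the Jensen-type inequality at the heart of Kinderlehrer--Pedregal, coming from lower semicontinuity of quasiconvex integrals along weakly converging sequences of gradients. The support condition (IV) is the only new point: the function
\[
R(x,A):=\max\{J_1(x)-\det A,\ \det A-J_2(x),\ 0\}
\]
is a nonnegative Carath\'eodory function vanishing identically along $(\nabla u_j)$. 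Testing the generation property with the bounded Carath\'eodory truncations $R\wedge M$ and letting $M\to\infty$ yields $\int R(x,A)\dd\nu_x(A)=0$ for a.e.\ $x\in\Omega$, whence $\supp\nu_x\subset S_{R(x,\frarg)}$.

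For $(ii)\Rightarrow(i)$, the plan is to apply the abstract convex integration principle, Theorem~\ref{thm:main_abstract}, to the same Carath\'eodory constraint function $R$. Its zero sublevel set is precisely the set appearing in (IV), and the hypotheses (I)--(IV) of Theorem~\ref{thm:main_intro} match those of Theorem~\ref{thm:main_abstract} verbatim. The only remaining thing to verify is the $p$-fullness/tightness condition on $S_{R(x,\frarg)}$ from Definition~\ref{def:p-full}, which says that the $p$-quasiconvex hull of the constraint set is large enough to accommodate the barycenter $[\nu]$ with controlled cost. This is the geometric heart of the matter, and is what is proved in Section~\ref{sc:geometry} for $d=3$ and Section~\ref{sc:arbitrary_dim} for $d>3$ via an explicit inductive lamination (Proposition~\ref{prop:geometry}). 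Once this input is in place, Theorem~\ref{thm:main_abstract} produces a sequence $(\nabla u_j)\subset\Lrm^p$ generating $\nu$ and satisfying the pointwise determinant constraint; the $p$-equiintegrability of $(\nabla u_j)$ and the boundary condition $u_j-u\in \Wrm^{1,p}_0(\Omega,\R^d)$ fall out of the construction itself, since every perturbation added in the convex integration scheme is compactly supported inside $\Omega$ and, by the geometric estimate at each step, comes in a family with uniformly controlled $\Lrm^p$-tails.

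The main obstacle, and the real geometric content of the argument, is the verification of $p$-fullness for sets of the form $\{A:J_1\leq \det A\leq J_2\}$. Concretely, for a given $A_0\in\R^{d\times d}$ with arbitrary determinant one must construct a laminate supported in the constraint set whose barycenter is $A_0$ and whose $p$-moment is controlled by $|A_0|^p$ together with the thresholds $J_1,J_2$. This exploits rank-one connections between matrices of different determinants, and relies crucially on the subcritical exponent $p<d$: only then is $|\det A|$ of lower growth than $|A|^p$, so that laminating between matrices with extreme determinants is $\Lrm^p$-cheap and permits one to adjust the determinant at arbitrarily small $\Lrm^p$-cost. The measurable $x$-dependence of the constraint is then absorbed into the Carath\'eodory framework of Theorem~\ref{thm:main_abstract} and causes no further difficulty once the pointwise geometry is in hand.
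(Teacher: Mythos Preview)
Your proposal is correct and follows essentially the same route as the paper: reduce to the abstract principle Theorem~\ref{thm:main_abstract} applied to $R(x,A)=\max\{J_1(x)-\det A,\det A-J_2(x),0\}$, with the geometric tightness input supplied by Proposition~\ref{prop:geometry} and Corollary~\ref{cor:geometry}. The only small point you glide over is that one must also check $R\in\Rcal^{p,d}(\Omega;\R^{d\times d})$, which is precisely where the integrability hypotheses on $J_1^+$ and $J_2^-$ enter (via $\kappa(x)=J_1^+(x)+J_2^-(x)$); this is an easy estimate but worth noting explicitly.
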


Recall that a locally bounded Borel function $h \colon \R^{d \times d} \to \R$ is called \term{quasiconvex} if
\begin{equation} \label{eq:quasiconvex}
  h(A_0) \leq \dashint_{\Bbb^d} h(\nabla v(x)) \dd x
\end{equation}
for all $A_0 \in \R^{d \times d}$ and all $v \in \Crm^\infty(\Bbb^d;\R^d)$ with $v(x) = A_0x$ on $\partial \Bbb^d$, see~\cite{Daco08DMCV} for more on this fundamental class of functions.

The conditions~(I)-(III) are the well-known criteria of Kinderlehrer--Pedregal~\cite{KinPed91CYMG, KinPed94GYMG} characterizing gradient $p$-Young measures. Observe also that the conditions on $J_i$ only concern the sets where the functions are finite and hence the constraints are active. For example, if $J_1 \equiv -\infty$, then the lower bound is inactive and the condition on $J_1$ is trivially true.

As an important special case, for $J_1(x) = J_2(x) = J(x)$ a.e.~in $\Omega$, measurable and in $\Lrm^{p/d}$, we obtain a characterization under a constraint of the Dacorogna--Moser~\cite{DacMos90PDEJ} form
\[
\left\{
\begin{aligned}
  &\det \nabla u_j(x) = J(x) \qquad\text{for all $j$ and a.e.\ $x\in\Omega$} \\
  &\text{$J \colon \Omega \to \R$ a given function.}
\end{aligned}
\right.
\]
Moreover, the generating sequence also satisfies $u_j - u \in \Wrm^{1,p}_0(\Omega,\R^d)$ where $u$ is the deformation underlying $\nu$.

In the cases relevant to elasticity, we choose $J_1(x) = r > 0$ a.e.~and $J_2 \equiv +\infty$, corresponding to a uniform positivity constraint on the Jacobian, i.e.
\[
  \det\nabla u_j(x) \geq r > 0  \qquad\text{for all $j$ and a.e.\ $x\in\Omega$.}
\]
In this context we note that requiring the Jacobian to be not only positive but uniformly positive, is often the appropriate condition when considering stored-energy functions $f$ under realistic growth conditions, i.e.
\[
  f(A) \to \infty  \quad\text{as}\quad  \det\, A \to 0^+\quad\mbox{ and }\quad f(A) = \infty  \quad\text{if}\quad  \det\, A \leq 0,
\]
see e.g.~\cite{Ball02SOPE}.

We stress that $p < d$ is necessary for our results to hold. This restriction, however, includes for instance the prototypical case of quadratic growth in three dimensions. This constraint comes as a consequence of the $d$-growth of the determinant, cf.~the discussion in~\cite{KoRiWi13OPYM} and also Remark~\ref{rk:remark_qc_hull} below.

Furthermore, choosing $J_1(x) = J_2(x) = 1$ a.e.~our result also pertains to Young measures generated by gradients of \enquote{incompressible} maps, i.e.
\[
\det \nabla u_j(x) = 1  \qquad\text{for all $j$ and a.e.\ $x\in\Omega$.}
\]
This constraint is particularly relevant in the study of solids and fluids. We remark again, however, that the terminology \enquote{incompressibility} should only be interpreted as a pointwise Jacobian constraint and not as a geometric condition.

The proofs of our results are based on two main pillars: On one hand, an explicit construction of laminates in matrix space allows us to build special homogeneous gradient Young measures expressing an arbitrary matrix as a hierarchy of oscillations along rank-one lines, see Section~\ref{sc:geometry}. On the other hand, the abstract convex integration principle mentioned above then enables us to construct generating sequences consisting of gradients and such that the aforementioned differential inclusions are satisfied \emph{exactly} (it is of course easy to satisfy them only approximately, but the real challenge is to make them satisfied exactly; cf. e.g.\ Chapter~5 of~\cite{Mull99VMMP}). This is contained in Section~\ref{sc:convex_int}.

\section{A general convex integration principle}
\label{sc:convex_int}

In order to state our convex integration principle and main result, we need two definitions:

\begin{definition}
For $1\leq p,q <\infty$, we denote by $\Rcal^{p,q}(\Omega;\R^{d \times d})$ the class of all Carath\'{e}odory functions $R:\Omega\times\R^{d\times d}\to \R$ for which there exists a measurable function $\kappa \colon \Omega\to [0,\infty)$ and a constant $C>0$ (independent of $x$) such that
\[
\int_{\Omega}\kappa(x)^{p/q} \dd x<\infty
\qquad\text{and}\qquad
|R(x,A)|\leq \kappa(x)+C|A|^q.
\]
\end{definition}

\begin{definition} \label{def:p-full}
Suppose $R\in\Rcal^{p,q}(\Omega,\R^{d\times d})$, where $\Omega \subset \R^d$, $1\leq p,q <\infty$, and let
\[
  S_{R(x,\frarg)}:=\setb{A\in\R^{d\times d}}{R(x,A)\leq 0}.
\]
For fixed $x \in \Omega$, we say that a set $D\supseteq S_{R(x,\frarg)}$ is contained \textbf{tightly} in the $p$-quasiconvex hull of $S_{R(x,\frarg)}$ if there exists a constant $C>0$ such that for every $M\in D$ there is a homogeneous gradient $p$-Young measure $\nu$ satisfying the following properties:
\begin{itemize}
\item[(i)] $[\nu] = M$;
\item[(ii)] $\supp \nu \subset S_{R(x,\frarg)}$;
\item[(iii)] $\displaystyle\int |A - M|^p\dd \nu(A) \leq C\max\left\{R(x,M),0\right\}^{p/q}$.
\end{itemize}
We say that a set $D\supseteq \bigcup_{x \in \Omega} S_{R(x,\frarg)}$ is contained tightly in the $p$-quasiconvex hull of $(S_{R(x,\frarg)})_{x\in\Omega}$ \textbf{uniformly} in $x$ if there exists a constant $C>0$ (independent of $x$) such that for every map $M \colon \Omega\to D$ with $M=\nabla u$ for some $u\in\Wrm^{1,p}(\Omega)$, there exists a gradient $p$-Young measure $(\nu_x)_{x\in\Omega}$ for which (i)-(iii) hold for almost every $x\in\Omega$ (with $M$ replaced by $M(x)$ and $\nu$ replaced by $\nu_x$).
\end{definition}

\begin{remark}\label{rk:remark_qc_hull}
\begin{enumerate}
\item In~\cite{KoRiWi13OPYM} it is shown that for the function $R(A)= - \det\, A$, $\mathbb{R}^{d\times d}$ is tightly contained in the $p$-quasiconvex hull of $S_R$. 

\item The \term{closed $p$-quasiconvex hull} of a set $S$, denoted $S^{p\text{-}qc}$, is classically defined as the set of all $M$ for which there exists a homogeneous gradient $p$-Young measure $\nu$ so that~(i),~(ii) hold in the above definition with $S$ in place of $S_{R(x,\frarg)}$.

\item Note that in the case that $R(x,\frarg)$ \emph{quasiconvex} (see~\eqref{eq:quasiconvex}) and $p\geq q$, the closed $p$-quasiconvex hull of $S_{R(x,\frarg)}$ is $S_{R(x,\frarg)}$ itself: Let $M \in S_{R(x,\frarg)}^{p\text{-}qc}$. By the Jensen-type inequality in the Kinderlehrer--Pedregal characterization of gradient Young measures, we obtain
\[
  \qquad R(x,M) \leq \lrangle{R(x,\frarg),\nu} \leq 0,
\]
thus $M \in S_{R(x,\frarg)}$. In particular, in this case, no strict superset of $S_{R(x,\frarg)}$ can satisfy conditions (i) and (ii) of Definition~\ref{def:p-full} and hence cannot be contained tightly in the $p$-quasiconvex hull of $S_{R(x,\frarg)}$.

\item Also, note that by (iii) we infer
\begin{equation*} 
  \qquad \int |A|^p \dd \nu(A)\leq C \biggl[ \int |A-M|^p \dd \nu(A) +|M|^p \biggr]
\leq C \bigl[ |R(x,M)|^{p/q}+|M|^p \bigr].
\end{equation*}
\end{enumerate}
\end{remark}

Recall that, by the characterization of Kinderlehrer--Pedregal~\cite{KinPed91CYMG,KinPed94GYMG}, $\nu=(\nu_x)_{x\in\Omega}$ is a \term{gradient $p$-Young measure}, that is, it is generated by a sequence of $\Lrm^p$-norm-bounded gradients, if and only if the following conditions hold:
\begin{itemize}
\item[(I)] $\displaystyle\int_{\Omega}\int\abs{A}^p \dd \nu_x(A)<\infty$;
\item[(II)] the barycenter $[\nu](x) := \int A \dd \nu_x(A)$ is a gradient, i.e.\ there exists $\nabla u \in \Lrm^p(\Omega;\R^{d \times d})$ with $[\nu] = \nabla u$ a.e.;
\item[(III)] for every quasiconvex function $h \colon \R^{d \times d} \to \R$ with $\abs{h(A)}\leq C(1+\abs{A}^p)$, the Jensen-type inequality
\[
  \qquad h(\nabla u(x)) \leq  \int h(A) \dd \nu_x(A)  \qquad\text{holds for a.e.\ $x \in \Omega$.}
\]
\end{itemize}
We also introduce, for $R\in\Rcal^{p,q}(\Omega;\R^{d \times d})$, the following pointwise condition, expressing that $\nu$ satisfies our side constraint:
\begin{itemize}
\item[(IV)] $\supp{\nu_x} \subset S_{R(x,\frarg)}$ for a.e.\ $x \in \Omega$.
\end{itemize}
Our abstract characterization result for gradient $p$-Young measures is the following:

\begin{theorem} \label{thm:main_abstract}
Let $\Omega \subset \R^d$ be open and bounded with $|\partial\Omega|=0$, let $R\in\Rcal^{p,q}(\Omega;\R^{d \times d})$ for $1 \leq q < \infty$, $1 < p < \infty$, and suppose that $\R^{d \times d}$ is tightly contained in the $p$-quasiconvex hull of $(S_{R(x,\frarg)})_{x\in\Omega}$, uniformly in $x$. Then, the following are equivalent for a $p$-Young measure $\nu = (\nu_x)_{x\in\Omega} \subset \Mbf^1(\R^{d \times d})$:
\begin{itemize}
  \item[(i)] There exists a sequence of gradients $(\nabla u_j) \subset \Lrm^p(\Omega;\R^{d \times d})$ that generates $\nu$, such that 
\[
  \qquad \nabla u_j(x) \in S_{R(x,\frarg)} \quad\text{for all $j\in\N$ and a.e.~$x\in\Omega$.}
\]
  \item[(ii)] The conditions (I)--(IV) hold.
\end{itemize}
Furthermore, in this case the sequence $(u_j)$ can be chosen such that $(\nabla u_j)$ is $p$-equiintegrable and $u_j - u\in\Wrm^{1,p}_0(\Omega,\mathbb{R}^d)$ where $u\in\Wrm^{1,p}(\Omega,\R^d)$ is the deformation underlying $\nu$ (i.e.\ the function whose gradient is the barycenter of $\nu$).
\end{theorem}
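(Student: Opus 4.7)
For the implication $(i)\Rightarrow(ii)$, conditions (I)--(III) follow directly from the Kinderlehrer--Pedregal characterization, since $\nu$ is by assumption generated by an $\Lrm^p$-bounded gradient sequence. To deduce (IV), fix $N>0$ and test the Young measure against the bounded Carath\'{e}odory function $f_N(x,A) := \min\{\max(R(x,A),0),\,N\}$. Since $R(x,\nabla u_j(x))\leq 0$ a.e., we have $f_N(x,\nabla u_j(x))=0$ a.e., and the uniformly bounded family $(f_N(\frarg,\nabla u_j))_j$ is equiintegrable, so Young-measure generation yields $\int_\Omega \int f_N(x,A) \dd\nu_x(A)\dd x = 0$. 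Hence $f_N(x,\frarg)$ vanishes $\nu_x$-a.e.\ for a.e.\ $x$, which is equivalent to $\supp\nu_x \subset S_{R(x,\frarg)}$.

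For the converse $(ii)\Rightarrow(i)$, I would proceed in three steps. \emph{Step 1.} By Kinderlehrer--Pedregal applied to (I)--(III), fix a $p$-equiintegrable generating sequence $(\nabla v_j)$ for $\nu$ with $v_j - u \in \Wrm^{1,p}_0(\Omega;\R^d)$, where $u$ is the deformation with $\nabla u = [\nu]$. \emph{Step 2.} For each $j$, apply Proposition~\ref{prop:convexint} (the convex-integration principle) to $v_j$: the tight containment of $\R^{d\times d}$ in the $p$-quasiconvex hull of $(S_{R(x,\frarg)})_{x\in\Omega}$ uniformly in $x$ allows the construction of $u_j \in \Wrm^{1,p}(\Omega;\R^d)$ with $u_j - v_j \in \Wrm^{1,p}_0(\Omega;\R^d)$, $\nabla u_j(x)\in S_{R(x,\frarg)}$ for a.e.\ $x$, and, via the quantitative control (iii) of Definition~\ref{def:p-full}, the tight estimate
\[
  \|\nabla u_j - \nabla v_j\|_{\Lrm^p(\Omega)}^p \leq C\int_\Omega \max\{R(x,\nabla v_j(x)),\,0\}^{p/q}\dd x + \eta_j,
\]
where $\eta_j\to 0$ absorbs the approximation error from the iterative scheme. \emph{Step 3.} Since $\nu_x$ is supported on $\{R(x,\frarg)\leq 0\}$ by (IV), the integrand $\max\{R(x,A),0\}^{p/q}$ vanishes $\nu_x$-a.e. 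The growth $|R(x,A)|\leq \kappa(x)+C|A|^q$, combined with $\kappa^{p/q}\in \Lrm^1$ and the $p$-equiintegrability of $(\nabla v_j)$, makes $(\max\{R(x,\nabla v_j),0\}^{p/q})_j$ equiintegrable (as it is dominated by $C(\kappa^{p/q}+|\nabla v_j|^p)$). Young-measure convergence then yields
\[
  \int_\Omega\max\{R(x,\nabla v_j),0\}^{p/q}\dd x \tolong \int_\Omega \int\max\{R(x,A),0\}^{p/q}\dd\nu_x(A)\dd x = 0.
\]
Thus $\|\nabla u_j - \nabla v_j\|_{\Lrm^p}\to 0$, and the stability of Young-measure generation under $\Lrm^p$-perturbations (Section~\ref{sc:setup}) gives $\nabla u_j \toY \nu$. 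The $p$-equiintegrability of $(\nabla u_j)$ is inherited from that of $(\nabla v_j)$ via the $\Lrm^p$-convergence of the differences, and $u_j-u = (u_j-v_j)+(v_j-u) \in \Wrm^{1,p}_0(\Omega;\R^d)$.

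\textbf{Main obstacle.} The genuine heavy lifting is carried by Proposition~\ref{prop:convexint}, whose iterative convex-integration scheme must simultaneously attain the exact side-constraint $\nabla u_j\in S_{R(x,\frarg)}$ and deliver the tight $\Lrm^p$ control encoded in Definition~\ref{def:p-full}. Granting that proposition, the most delicate point in the present argument is the equiintegrability required to legitimize Young-measure convergence for the unbounded integrand $\max\{R(x,\frarg),0\}^{p/q}$ in Step~3; this is precisely why the growth $|R(x,A)|\leq \kappa(x)+C|A|^q$ with $\kappa^{p/q}\in\Lrm^1$ is built into the definition of $\Rcal^{p,q}$.
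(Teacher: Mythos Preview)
Your proof is correct and follows essentially the same route as the paper's: condition~(IV) is verified by testing against a nonnegative Carath\'{e}odory function vanishing on $S_{R(x,\frarg)}$, and the converse proceeds by taking a Kinderlehrer--Pedregal generating sequence, applying Proposition~\ref{prop:convexint} to each term, and using Young-measure convergence of $\max\{R(\frarg,\nabla v_j),0\}^{p/q}$ to zero to conclude that the modified sequence is $\Lrm^p$-close to the original. The only cosmetic remark is that Proposition~\ref{prop:convexint}(iii) already delivers the estimate of your Step~2 exactly, so the extra $\eta_j$ term is unnecessary.
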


We first prove a key proposition, representing a convergence principle in the spirit of convex integration.

\begin{proposition}\label{prop:convexint}
Assume that $\Omega$, $R$, $p$, $q$ are as in the preceding theorem and suppose that $\R^{d \times d}$ is contained tightly in the $p$-quasiconvex hull of $(S_{R(x,\frarg)})_{x\in\Omega}$ uniformly in $x$, and let $u\in \Wrm^{1,p}(\Omega;\R^d)$. Then there exists $v\in \Wrm^{1,p}(\Omega;\R^d)$ such that
\begin{itemize}
\item[(i)]$\nabla v(x) \in S_{R(x,\frarg)}$\quad for a.e.\ $x \in \Omega$,\vspace{0.2cm}
\item[(ii)] $v - u\in\Wrm^{1,p}_0(\Omega,\mathbb{R}^d)$,\vspace{0.2cm}
\item[(iii)]$\displaystyle \norm{\nabla v-\nabla u}^p_p\leq C\int_{\Omega} \ONE_{\{y\,:\,R(y,\nabla u(y))>0\}}(x) \, |R(x,\nabla u(x))|^{p/q}\dd x,$\vspace{0.2cm}
\end{itemize}
where $C>0$ is a constant independent of $u$.
\end{proposition}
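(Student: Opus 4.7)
I proceed by an iteration of convex integration type. For $w \in \Wrm^{1,p}(\Omega;\R^d)$, define the \emph{defect}
\[
  d(w) := \int_\Omega \ONE_{\{y\,:\,R(y,\nabla w(y))>0\}}(x)\, |R(x, \nabla w(x))|^{p/q} \dd x.
\]
The core of the argument is a single-step lemma: given $w$, produce $w'$ with $w' - w \in \Wrm^{1,p}_0(\Omega;\R^d)$, $d(w') \leq d(w)/2$, and $\|\nabla w' - \nabla w\|_p^p \leq 2C\, d(w)$, where $C$ is the tight containment constant. Iterating from $u_0 := u$ produces a Cauchy sequence $(u_k)$ in $\Wrm^{1,p}(\Omega;\R^d)$ whose limit $v$ will satisfy (i)--(iii).

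\textbf{One step.} Given $w$, the tight containment hypothesis applied to the gradient field $M(x) := \nabla w(x)$ yields a gradient $p$-Young measure $(\nu_x)_{x\in\Omega}$ with $[\nu_x] = \nabla w(x)$, $\supp \nu_x \subset S_{R(x,\frarg)}$ a.e., and
\[
  \int_\Omega \int |A - \nabla w(x)|^p \dd \nu_x(A) \dd x \leq C\, d(w).
\]
By the Kinderlehrer--Pedregal theorem (with Zhang's truncation to obtain $p$-equiintegrability, and a standard boundary cut-off to match boundary data), there is a sequence $(w_j)$ generating $\nu$ with $w_j - w \in \Wrm^{1,p}_0(\Omega;\R^d)$ and $(|\nabla w_j|^p)$ equiintegrable. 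Define $\phi(x,A) := \max\{R(x,A),0\}^{p/q}$: from $R \in \Rcal^{p,q}$ one has $|\phi(x,A)| \leq C'(\kappa(x)^{p/q}+|A|^p)$, so that $(\phi(\frarg, \nabla w_j))$ is equiintegrable, and since $\phi(x,\frarg) \equiv 0$ on $\supp \nu_x$, Young measure generation yields $d(w_j) = \int_\Omega \phi(x, \nabla w_j(x)) \dd x \to 0$. Equiintegrability of $|\nabla w_j - \nabla w|^p$ and generation likewise give $\|\nabla w_j - \nabla w\|_p^p \to \int_\Omega\!\int |A - \nabla w|^p \dd \nu_x(A) \dd x \leq C\, d(w)$. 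Choosing $w' := w_{j_0}$ for $j_0$ sufficiently large realizes the single-step claim.

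\textbf{Passage to the limit.} From $d(u_k) \leq 2^{-k}d(u_0)$ and $\|\nabla u_{k+1}-\nabla u_k\|_p \leq (2C)^{1/p}2^{-k/p}d(u_0)^{1/p}$, the sequence $(u_k)$ is Cauchy in $\Wrm^{1,p}$ and converges to some $v$ with $v-u \in \Wrm^{1,p}_0(\Omega;\R^d)$, which is (ii). Summing the geometric series in $k$ and raising to the $p$-th power yields (iii) with a constant depending only on $C$ and $p$. Passing to a subsequence along which $\nabla u_{k_n} \to \nabla v$ a.e., continuity of $R(x,\frarg)$ gives $R(x,\nabla u_{k_n}(x)) \to R(x,\nabla v(x))$ a.e., while $d(u_{k_n})\to 0$ forces $R(\frarg,\nabla u_{k_n})^+\to 0$ in $\Lrm^{p/q}(\Omega)$ and hence in measure. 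Combining these yields $R(x,\nabla v(x))\leq 0$ a.e., which is (i).

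\textbf{Main obstacle.} The delicate step is the decay $d(w_j)\to 0$ along the generating sequence: this requires $\phi(x,A)=\max\{R(x,A),0\}^{p/q}$ to be a legitimate test function for Young measure convergence, resting on the precise growth built into $\Rcal^{p,q}$ together with $p$-equiintegrability and boundary-data matching for the generating sequence of $\nu$. A second subtlety, supplied by the hypothesis, is that the tight containment constant $C$ is uniform in $x$---indispensable so that the per-step estimates compose into a convergent geometric series with $k$-independent total.
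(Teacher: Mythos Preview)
Your proof is correct and follows essentially the same approach as the paper: an iterative scheme reducing the defect $d(w)=\int_\Omega \max\{R(x,\nabla w),0\}^{p/q}\dd x$ geometrically at each step via the tight-containment Young measure and a suitably chosen element of its generating sequence, yielding a Cauchy sequence in $\Wrm^{1,p}$ whose limit satisfies (i)--(iii). The only cosmetic differences are your decay rate ($2^{-k}$ versus the paper's $2^{-kp}$) and your passage to (i) via a.e.\ convergence plus convergence in measure rather than Vitali's theorem---both inessential.
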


\begin{remark}
The preceding theorem and proposition also hold in the more general situation where a family $(D_x)_{x\in\Omega}$ is tightly contained in the $p$-quasiconvex hull of $S_{R(x,\frarg)}$ uniformly in $x$ (note that Definition~\ref{def:p-full} can be suitably generalized to $x$-dependent sets $D_x$ in an obvious way), under the additional assumption that any gradient $p$-Young measure $\nu$ with $\supp \nu_x \subset D_x$ a.e.~can be generated by a $p$-equiintegrable sequence of gradients $(\nabla u_j)$ such that $\nabla u_j(x)\in D_x$ a.e. and $u_j - u\in\Wrm^{1,p}_0(\Omega,\R^d)$ ($u$ denotes the map underlying $\nu$).
\end{remark}

\begin{proof}[Proof of Proposition~\ref{prop:convexint}]
Assume without loss of generality that
\[
  \int_{\Omega}\ONE_{\{y\,:\,R(y,\nabla u(y))>0\}}(x) \, |R(x,\nabla u(x))|^{p/q}\dd x > 0.
\]
We construct a sequence of functions $\{v^l\}_{l\in\N}$, bounded in $\Wrm^{1,p}(\Omega;\R^d)$, such that 
\begin{align}
\label{eq:det_proof}
&v^l - u \in\Wrm^{1,p}_0(\Omega;\R^d),\\
\label{eq:smalldet}
&\int_{\Omega}\ONE_{\{R(y,\nabla v^l(y))>0\}}(x) |R(x,\nabla v^l(x))|^{p/q}\dd x \\
&\qquad \leq 2^{-lp}\int_{\Omega}\ONE_{\{R(y,\nabla u(y))>0\}}(x) |R(x,\nabla u(x))|^{p/q}\dd x, \notag\\
\label{eq:cauchy}
&\int_{\Omega}|\nabla v^{l+1}(x)-\nabla v^l(x)|^p\dd x \\
&\qquad \leq 2^{-(l-1)p}C\int_{\Omega}\ONE_{\{R(y,\nabla u(y))>0\}}(x) |R(x,\nabla u(x))|^{p/q}\dd x, \notag
\end{align}
where $C>0$ is a constant.

Let us construct the sequence inductively. Set $v^0 = u$ so that~\eqref{eq:det_proof} and~\eqref{eq:smalldet} are satisfied. If $v^l\in \Wrm^{1,p}(\Omega;\R^d)$ has been constructed to satisfy~\eqref{eq:det_proof} and~\eqref{eq:smalldet}, we find $v^{l+1}$ in the following way: since $\R^{d \times d}$ is tightly contained in the $p$-quasiconvex hull of $S_{R(x,\frarg)}$ uniformly in $x$, there exists a gradient $p$-Young measure $(\nu_x^l)_{x\in\Omega}$ with $[\nu_x^l]=\nabla v^l(x)$ and with support in the set $S_{R(x,\frarg)}$ almost everywhere. 
Observe that by~(iii) in Definition~\ref{def:p-full}, for $x \in \Omega$ such that $R(x,\nabla v^l(x))\leq 0$ we have $\nu_x^l=\delta_{\nabla v^l(x)}$.

By standard Young measure arguments (see for example~\cite{Pedr97PMVP}), there exists a $p$-equiintegrable sequence of gradients $(\nabla v^{l,m})_m \subset \Lrm^p(\Omega;\R^{d \times d})$ generating $\nu^l$ such that $v^{l,m} - v^l\in\Wrm^{1,p}_0(\Omega;\R^d)$ and hence $v^{l,m} - u\in\Wrm^{1,p}_0(\Omega;\R^d)$ for all $m\in\mathbb{N}$.

We define $g:\Omega\times\R^{d\times d}\to\R$ by
\begin{equation}\label{eq:dettest}
g(x,A)= \ONE_{\{y\,:\,R(y,A)>0\}}(x) |R(x,A)|^{p/q}=\begin{cases}R(x,A)^{p/q} & \text{if $R(x,A)>0$,}\\
0 & \text{otherwise.}
\end{cases}
\end{equation}
Using $g$ as a test function and the fact that $\nu^l_x$ is supported in $S_{R(x,\frarg)}$, by Young measure representation, we may choose $m$ large enough, say $m=M$, and define $\nabla v^{l+1}:=\nabla v^{l,M}$ such that
\begin{align*}
&\int_{\Omega}\ONE_{\{R(y,\nabla v^{l+1}(y))>0\}}(x)|R(x,\nabla v^{l+1}(x))|^{p/q}\dd x \\
&\qquad \leq 2^{-(l+1)p}\int_{\Omega}\ONE_{\{R(y,\nabla u(y))>0\}}(x) |R(x,\nabla u(x))|^{p/q}\dd x,
\end{align*}
i.e.~\eqref{eq:det_proof} as well as \eqref{eq:smalldet} hold for $v^{l+1}$.
Also, by taking $M$ even larger if necessary, we can ensure that also
\begin{equation}\label{eq:Mchoice}
\int_{\Omega}|\nabla v^{l+1}(x)-\nabla v^l(x)|^p\dd x\leq 2^p\int_{\Omega}\int|A-\nabla v^l(x)|^p\dd\nu_x^l(A) \dd x
\end{equation}
Indeed, this follows again from Young measure representation for the integrand $|A-\nabla v^l(x)|^p$. 

Next, for any $l \in \N$, by property (iii) of Definition~\ref{def:p-full} and~\eqref{eq:smalldet} we infer that 
\[
\begin{aligned}
\int_{\Omega}\int|A-\nabla v^l(x)|^p\dd\nu_x^l(A)\dd x&\leq C\int_{\Omega}\ONE_{\{R(y,\nabla u(y))>0\}}(x) |R(x,\nabla v^l(x))|^{p/q}\dd x\\
&\leq 2^{-lp} C \int_{\Omega} \ONE_{\{R(y,\nabla u(y))>0\}}(x) |R(x,\nabla u(x))|^{p/q}\dd x
\end{aligned}
\] 
for a constant $C>0$ independent of $x$. Combining with~\eqref{eq:Mchoice} we get the estimate
\begin{equation*}
\int_{\Omega}|\nabla v^{l+1}(x)-\nabla v^l(x)|^p\dd x\leq 2^{-(l-1)p} C \int\ONE_{\{R(y,\nabla u(y))>0\}}(x) |R(x,\nabla u(x))|^{p/q}\dd x,
\end{equation*}
which is \eqref{eq:cauchy}, completing the definition of our sequence.

The result then follows readily: by \eqref{eq:cauchy}, $(\nabla v^l)_{l\in\N}$ is a Cauchy sequence in $\Lrm^p(\Omega;\R^{d \times d})$ and therefore has a strong $\Lrm^p$-limit $\nabla v$. In particular, it holds that $v - u\in\Wrm^{1,p}_0(\Omega;\R^d)$ and (ii) follows. Using the triangle inequality and~\eqref{eq:cauchy}, we deduce that
\[
\begin{aligned}
\norm{\nabla v-\nabla u}_p&\leq\sum_{l=0}^{\infty}\norm{\nabla v^{l+1}-\nabla v^l}_p\\
&\leq C^{1/p}\left(\int_{\Omega}\ONE_{\{R(y,\nabla u(y))>0\}}(x) |R(x,\nabla u(x))|^{p/q}\dd x\right)^{1/p}\sum_{l=0}^{\infty}2^{-(l-1)}\\
&\leq 4C^{1/p}\left(\int_{\Omega}\ONE_{\{R(y,\nabla u(y))>0\}}(x) |R(x,\nabla u(x))|^{p/q}\dd x\right)^{1/p},
\end{aligned}
\] 
proving (iii). Lastly, $(\nabla v^l)_l$ is $p$-equiintegrable (being Cauchy in $\Lrm^p$), and since $|R(x,\nabla v^l(x))|^{p/q}\leq C(|\kappa(x)|^{p/q}+|\nabla v^l(x)|^p)$, also $\{|R(\frarg,\nabla v^l)|^{p/q}\}_{l\in\N}$ is equiintegrable and converges, up to a subsequence, to $|R(\frarg,\nabla v)|^{p/q}$. Therefore, by Vitali's Convergence Theorem, 
\[
\int_{\Omega}\ONE_{\{R(y,\nabla v(y))>0\}}(x)|R(x,\nabla v(x))|^{p/q}\dd x=0,
\] 
which implies $R(x,\nabla v(x))\leq 0$ for a.e.\ $x\in\Omega$, i.e.\ (i), and the proof is complete.
\end{proof}

\begin{proof}[Proof of Theorem~\ref{thm:main_abstract}]
(i) $\Rightarrow$ (ii):
Conditions (I)--(III) follow from the usual Kinderlehrer--Pedregal Theorem in~\cite{KinPed94GYMG}. Regarding (IV), let $h\in \Lrm^{\infty}(\Omega\times\R^{d\times d})$ be Carath\'{e}odory and such that $\supp h(x,\frarg) \subset\subset \R^{d \times d} \setminus S_{R(x,\frarg)}$ for almost every $x$. Then, by the assumptions on $\nabla u_j$,
\[
  \int_\Omega \int h(x,A) \dd \nu_x(A) \dd x = \lim_{j\to\infty} \int_\Omega  h(x,\nabla u_j(x)) \dd x = 0.
\]
Varying $h$, we infer that $\supp \nu_x \subset S_{R(x,\frarg)}$ for a.e.\ $x \in \Omega$.

(ii) $\Rightarrow$ (i):
For $1 < p < \infty$, $1 \leq q < \infty$ as in Definition~\ref{def:p-full}, let $\nu$ be a gradient $p$-Young measure with $\supp \nu_x \subset S_{R(x,\frarg)}$ for a.e.\ $x \in \Omega$. Standard results yield that there exists a generating sequence $(\nabla u_j)$ for $\nu$ which is $p$-equiintegrable and satisfies $u_j - u\in\Wrm^{1,p}_0(\Omega;\R^d)$ where $\nabla u=[\nu]$. By Young measure representation applied to the test function $g$ in~\eqref{eq:dettest} and the assumption on the support of $\nu$, we may assume (after passing to a subsequence if necessary) that
\begin{equation}\label{eq:closetoK}
\int_{\Omega}\ONE_{\{R(y,\nabla u_j(y))>0\}}(x)|R(x,\nabla u_j(x))|^{p/q}\dd x<\frac{1}{j^p}.
\end{equation}
Applying Proposition~\ref{prop:convexint} to each $u_j$, we obtain a new sequence $(v_j)$, such that $\nabla v_j(x)\in S_{R(x,\frarg)}$ a.e., $v_j - u\in\Wrm^{1,p}_0(\Omega;\R^d)$ and, by~\eqref{eq:closetoK} and part~(iii) of Proposition~\ref{prop:convexint},
\[
\norm{\nabla u_j-\nabla v_j}_p<\frac{C^{1/p}}{j}.
\]  
Hence $(\nabla v_j)$ is $p$-equiintegrable and generates $\nu$.
\end{proof}

\section{Differential inclusions involving prescribed Jacobians} \label{sc:geometry}

In this section we show that all of $\R^{d\times d}$ is tightly contained in the $p$-quasiconvex hull of $(S_{R(x,\frarg)})_{x \in \Omega}$ uniformly in $x$ for all $p\in[1,d)$, where for $J_1$ and $J_2$ as in Theorem~\ref{thm:main_intro},
\[
R(x,A) := \max\{J_1(x) - \det\, A, \det\, A - J_2(x), 0 \}
\]
and the corresponding sublevel sets are given by
\[
S_{R(x,\frarg)}=\setb{A\in\R^{d\times d}}{J_1(x)\leq \det\, A \leq J_2(x)}.
\]

Then Theorem~\ref{thm:main_abstract} establishes Theorem~\ref{thm:main_intro}.
We note that the above function $R$ is indeed an element of $\Rcal^{p,d}(\Omega,\R^{d\times d})$. To see this, note that
\begin{align*}
0 \leq R(x,A) &\leq \max\{J_1^+(x) - \det\, A, \det\, A + J_2^-(x), 0 \} \\
&\leq J_1^+(x) + J_2^-(x) + C|A|^d =: \kappa(x) + C|A|^d
\end{align*}
with $\kappa\in \Lrm^{p/d}(\Omega)$. Of course, since $R(x,\frarg)$ is quasiconvex for a.e.~$x\in\Omega$, by Remark~\ref{rk:remark_qc_hull} (3) we are forced to restrict attention to $p<d$.

The fact that $\R^{d\times d}$ is contained tightly in the $p$-quasiconvex hull of the above $(S_{R(x,\frarg)})_{x\in\Omega}$ will be a corollary to the following proposition, which vastly generalizes Proposition 3.1 in~\cite{KoRiWi13OPYM}:

\begin{proposition} \label{prop:geometry}
Let $1 \leq p < d$, $r\in\Lrm^{p/d}(\Omega)$, and set $R(x,A)=|\det\, A-r(x)|$. Then $R\in \Rcal^{p,d}(\Omega,\R^{d\times d})$ and $\R^{d\times d}$ is tightly contained in the $p$-quasiconvex hull of $(S_{R(x,\frarg)})_{x\in\Omega}$ uniformly in $x$.

\end{proposition}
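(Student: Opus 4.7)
The first claim, $R \in \Rcal^{p,d}(\Omega,\R^{d\times d})$, is an immediate consequence of the pointwise bound $\abs{\det A} \leq C \abs{A}^d$ (with $C$ depending only on the dimension): indeed, $\abs{R(x,A)} \leq \abs{r(x)} + C\abs{A}^d$, and the function $\kappa(x) := \abs{r(x)}$ lies in $\Lrm^{p/d}(\Omega)$ by hypothesis. For the substantive tightness assertion, fix a measurable map $M \colon \Omega \to \R^{d \times d}$ with $M = \nabla u$ for some $u \in \Wrm^{1,p}(\Omega;\R^d)$, and note that Definition~\ref{def:p-full} requires the production of a gradient $p$-Young measure $(\nu_x)_{x\in\Omega}$ such that, for a.e.~$x \in \Omega$,
\[
[\nu_x] = M(x), \qquad \supp \nu_x \subset \setb{A \in \R^{d \times d}}{\det\, A = r(x)}, \qquad \int \abs{A - M(x)}^p \dd\nu_x(A) \leq C\abs{\det M(x) - r(x)}^{p/d},
\]
with $C$ independent of $x$. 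My plan is to build each $\nu_x$ pointwise as a homogeneous gradient $p$-Young measure (a laminate), and then check that the resulting family is a gradient $p$-Young measure by verifying its Borel-measurable dependence on $x$ together with the integrability of its $p$-th moment, which secures conditions (I)--(III) of the Kinderlehrer--Pedregal characterization.

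The core construction of $\nu_x$ proceeds as follows. Via a singular value decomposition $M(x) = U \Sigma V^\Trm$ and the orthogonal invariance of $\det$ and of the Frobenius norm, it suffices to construct the laminate when $M(x) = \Sigma$ is diagonal and then push forward by the orthogonal transformations. Starting from $M_0 = \Sigma$, the \emph{staircase laminate} is built by iteratively splitting the current matrix along a carefully chosen rank-one direction as
\[
M_k = \lambda_k A_{k+1} + (1 - \lambda_k) M_{k+1},
\]
where $A_{k+1}$ already lies on the target level $\{\det = r(x)\}$ and the residual $M_{k+1}$ absorbs the remaining determinant discrepancy, with the weight $(1-\lambda_k)$ on the residual driven geometrically to zero. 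In the limit the measure is supported entirely in $\{\det = r(x)\}$ with barycenter $M(x)$. Because $\det$ is affine along each rank-one line, both the direction and the amplitude at each stage must be chosen in a dimension-sensitive manner, which is why the paper treats $d = 3$ first and handles $d > 3$ separately by a notationally heavier argument.

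The main obstacle will be establishing the quantitative estimate (iii) uniformly in $x$ and in the singular values of $\Sigma$. Since $\det$ is homogeneous of degree $d$, in the worst-case configuration the rank-one perturbation needed to shift the determinant by an amount $\abs{\det M - r}$ has magnitude of order $\abs{\det M - r}^{1/d}$, and the total cost $\sum_k \lambda_k \abs{A_{k+1} - M(x)}^p$ must then be bounded by $C\abs{\det M - r}^{p/d}$ through a careful balance of amplitudes against the geometrically decaying weights on the residuals. The hypothesis $p < d$ enters precisely here: it provides the integrability slack that keeps the accumulated $\Lrm^p$ cost of these potentially large rank-one splittings summable, whereas for $p \geq d$ the rigidity recorded in Remark~\ref{rk:remark_qc_hull}\,(3) forbids any nontrivial construction. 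Once the pointwise construction and its Borel dependence on $(M, r)$ are in hand, Proposition~\ref{prop:geometry} follows, and Theorem~\ref{thm:main_intro} then results from Theorem~\ref{thm:main_abstract} with $q = d$.
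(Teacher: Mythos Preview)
Your outline is strategically aligned with the paper's proof: SVD reduction to diagonal matrices, an iterated laminate pushing mass onto $\{\det = r\}$ with geometrically decaying residual weight, the target scaling $\abs{r - \det M}^{1/d}$ for each perturbation, summability of the $p$-th moments precisely when $p < d$, and the measurability check for the $x$-parametrized family. The paper's actual splitting differs somewhat from your single-branch staircase $M_k = \lambda_k A_{k+1} + (1-\lambda_k)M_{k+1}$: at each step it performs a \emph{double} rank-one split along $\ee_1\otimes\ee_2$ and $\ee_2\otimes\ee_1$ with amplitude $\gamma = \abs{r-\det M_0}^{1/2}/(\sigma_3\cdots\sigma_d)^{1/2}$, producing four matrices with weight $1/4$ each---two landing exactly on $\{\det = r\}$ and two overshooting to $\{\det = 2\det M_0 - r\}$. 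The doubling of the determinant defect on the bad branch against the halving of its total weight is what gives the convergent ratio $2^{p/d-1}<1$. This second-order interaction (rather than the affineness of $\det$ along a single rank-one line) is what makes the perturbation size come out as $\abs{r-\det M_0}^{1/2}/(\sigma_3\cdots\sigma_d)^{1/2}$ instead of linearly in $\abs{r-\det M_0}$.

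There is one genuine gap in your plan: you do not address what happens when the singular values are small, i.e., when $\sigma_3\cdots\sigma_d < (\abs{r}/2)^{(d-2)/d}$. In that regime the bound $\abs{M_{1,J}-M_0} \leq C\abs{r-\det M_0}^{1/d}$ fails with a uniform constant, because the amplitude $\gamma$ above blows up. The paper handles this (its Case~II) by a preliminary rank-one split along $\ee_3\otimes\ee_3$ with amplitude $\delta = 2(\abs{r}/2)^{1/d}$; since $\abs{\det M_0}\leq\abs{r}/2$ in this regime, the cost of this step is at most $C\abs{r-\det M_0}^{1/d}$, and it boosts the relevant singular value so that Case~I applies (for $d>3$ this may need to be repeated up to $d-2$ times). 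You correctly flag uniformity ``in the singular values of $\Sigma$'' as the main obstacle, but the specific mechanism---this case distinction and preliminary diagonal split---is essential for the uniform constant in~(iii) and should be made explicit.
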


Before proving Proposition~\ref{prop:geometry}, we state and prove in the form of a corollary the result concerning the function $R(x,A)=\max\{J_1(x) - \det\, A, \det\, A - J_2(x), 0\}$.

\begin{corollary}\label{cor:geometry}
Let $1 \leq p < d$ and $J_1$, $J_2$ as in Theorem~\ref{thm:main_intro}. Set
\[
R(x,A)=\max\bigl\{J_1(x) - \det\, A, \det\, A - J_2(x), 0\bigr\}.
\]
Then $\R^{d\times d}$ is tightly contained in the $p$-quasiconvex hull of $(S_{R(x,\frarg)})_{x\in\Omega}$ uniformly in $x$.
\end{corollary}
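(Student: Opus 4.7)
The plan is to reduce Corollary~\ref{cor:geometry} to Proposition~\ref{prop:geometry} by constructing, for each given $u\in\Wrm^{1,p}(\Omega;\R^d)$ with gradient taking values in $D=\R^{d\times d}$, an auxiliary measurable function $r\colon\Omega\to\R$ adapted to the position of $\det\nabla u(x)$ relative to the interval $[J_1(x),J_2(x)]$. Concretely, I would set
\[
r(x) :=
\begin{cases}
\det\nabla u(x) & \text{if } J_1(x) \leq \det\nabla u(x) \leq J_2(x),\\
J_1(x) & \text{if } \det\nabla u(x) < J_1(x),\\
J_2(x) & \text{if } \det\nabla u(x) > J_2(x).
\end{cases}
\]
By construction $r(x)\in[J_1(x),J_2(x)]$ almost everywhere, and measurability of $r$ follows from that of $\det\nabla u$, $J_1$ and $J_2$.

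The first task is to check that $r\in \Lrm^{p/d}(\Omega)$ so that Proposition~\ref{prop:geometry} applies. On the first region $|r|^{p/d}=|\det\nabla u|^{p/d}\leq C|\nabla u|^p\in \Lrm^1(\Omega)$. On the second region, if $J_1(x)\geq 0$ then $|r(x)|^{p/d}=(J_1^+(x))^{p/d}$, which is integrable by hypothesis; while if $J_1(x)<0$ then $\det\nabla u(x)<J_1(x)<0$, so $|r(x)|^{p/d}=|J_1(x)|^{p/d}\leq|\det\nabla u(x)|^{p/d}\leq C|\nabla u(x)|^p$. The third region is handled symmetrically using the $\Lrm^{p/d}$-integrability of $J_2^-$.

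Second, I would apply Proposition~\ref{prop:geometry} to this $r$, producing a gradient $p$-Young measure $(\nu_x)_{x\in\Omega}$ and a constant $C>0$, independent of $x$, such that $[\nu_x]=\nabla u(x)$, $\supp\nu_x \subset \setb{A\in\R^{d\times d}}{\det A = r(x)}$, and
\[
\int|A-\nabla u(x)|^p\dd\nu_x(A)\leq C\,|\det\nabla u(x)-r(x)|^{p/d}
\qquad\text{for a.e. } x\in\Omega.
\]

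Third, I would verify that this very family $(\nu_x)$ satisfies conditions (i)--(iii) of Definition~\ref{def:p-full} for the new constraint $R(x,A)=\max\{J_1(x)-\det A,\det A-J_2(x),0\}$. Condition (i) is immediate. Condition (ii) follows because $r(x)\in[J_1(x),J_2(x)]$, so the level set $\{\det A=r(x)\}$ is contained in $S_{R(x,\frarg)}$. For condition (iii) I would do a short case check showing that in all three cases above, $|\det\nabla u(x)-r(x)|$ equals exactly $R(x,\nabla u(x))$: it vanishes together with $R(x,\nabla u(x))$ in Case 1, equals $J_1(x)-\det\nabla u(x)$ in Case 2, and equals $\det\nabla u(x)-J_2(x)$ in Case 3. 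Substituting this identity into the inequality from Proposition~\ref{prop:geometry} yields condition (iii) for the new $R$ with the same constant $C$.

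I do not anticipate a serious obstacle beyond the bookkeeping above: the substantive geometric content (building laminates around a level set of the determinant and controlling their $p$-th moment by the determinant gap) lives entirely in Proposition~\ref{prop:geometry}, and the sandwich constraint $J_1(x)\leq\det A\leq J_2(x)$ is absorbed by the adaptive choice of $r(x)$ which pins the determinant to the closest admissible value.
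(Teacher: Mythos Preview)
Your proposal is correct and follows essentially the same route as the paper: define the adaptive target $r(x)$ exactly as you do (the paper calls it $r_M$), invoke Proposition~\ref{prop:geometry}, and observe that $\{\det A=r(x)\}\subset S_{R(x,\frarg)}$ together with $|r(x)-\det\nabla u(x)|\leq R(x,\nabla u(x))$. Your argument is in fact slightly more detailed---you spell out why $r\in\Lrm^{p/d}(\Omega)$ case by case (the paper merely asserts this), and you note that the identity $|r(x)-\det\nabla u(x)|=R(x,\nabla u(x))$ actually holds with equality, which is stronger than what the paper uses.
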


\begin{proof}
Suppose $M=\nabla u$ for some $u\in\Wrm^{1,p}(\Omega;\R^d)$. Define the function $r_M:\Omega\to\Rbb$ in the following way:
\[
r_M(x)=\begin{cases}\det\, M(x) & \text{if $J_1(x)\leq\det\, M(x)\leq J_2(x)$,}\\
J_1(x) & \text{if $\det\, M(x)<J_1(x)$,}\\
J_2(x) & \text{if $\det\, M(x)>J_2(x)$.}
\end{cases}
\] 
It then follows from the assumptions on $J_1$ and $J_2$ that $r_M\in\Lrm^{p/d}(\Omega)$, and therefore Proposition~\ref{prop:geometry} applied to $r_M$ yields a $p$-gradient Young measure $(\nu_x)_x$ such that, for almost every $x\in\Omega$, 
\begin{itemize}
\item[(i)] $[\nu_x] = M(x)$;
\item[(ii)] $\supp \nu_x \subset \setb{A\in\R^{d\times d}}{\det\, A = r_M(x)}\subset S_{R(x,\frarg)}$;
\item[(iii)] $\displaystyle\int |A - M|^p\dd \nu_x(A) \leq C|r_M(x) - \det\, M(x)|^{p/d}$,
\end{itemize}
where $C$ is independent of $M$ and $x$. The claim now follows from the observation, using the definitions of $r_M$ and $R$, that
\[
|r_M(x) - \det\, M(x)|\leq R(x,M(x))
\]
for almost every $x$.
\end{proof}

\subsection{Three dimensions}

We first prove Proposition~\ref{prop:geometry} for $d=3$ only. The proof for $d=2$ is similar but simpler and the proof for $d>3$ is outlined in the next section. 

\begin{proof}[Proof of Proposition~\ref{prop:geometry} for $d = 3$]
In the first steps of the proof, we fix a matrix $M_0$ and a real number $r$.

\proofstep{Step~1.}
Following~\cite{KoRiWi13OPYM}, we transform an arbitrary matrix $M_0$ to diagonal form using the real singular value decomposition and write $M_0 = \tilde{P} \tilde{D}_0 \tilde{Q}^T$ where $\tilde{D}_0 = \mathrm{diag}(\sigma_1,\sigma_2,\sigma_3)$ with $0\leq\sigma_1 \leq \sigma_2\leq\sigma_3$, and $\tilde{P},\tilde{Q}$ orthogonal matrices. If $\det\, M_0 < 0$, either $\tilde{P}$ or $\tilde{Q}$ has negative determinant, say $\det\, \tilde{P} < 0$ (the other case is similar). Then, $M_0 = P D_0 Q^T$ where
\[
  D_0 := \mathrm{diag}(\sigma_1,\sigma_2,-\sigma_3),  \qquad
  P := \tilde{P}\mathrm{diag}(1,1,-1),  \qquad
  Q := \tilde{Q},
\]
with $P,Q \in \mathrm{SO}(3)$ and $\det\, D_0 < 0$. Similarly, if $\det\, M_0 \geq 0$, we may write $M_0 = P D_0 Q^T$, where $P,Q \in \mathrm{SO}(3)$ and $\det\, D_0 \geq 0$ for
\[
  D_0 = \mathrm{diag}(\sigma_1,\sigma_2,\sigma_3).
\]
Note that if $D_0$ can be written as a laminate then the same holds for $M_0$ since $P (a \otimes b) Q^T = (Pa) \otimes (Qb)$ for any $a,b \in \R^3$. Also, we remark that the matrices $D_0$ and $M_0$ share the same
determinant and (Frobenius) matrix norm. Consequently, we may henceforth assume without loss of generality that
\[
  M_0 = \mathrm{diag}(\sigma_1,\sigma_2,\pm\sigma_3).
\]
with $0\leq\sigma_1\leq\sigma_2\leq\sigma_3$.

We now distinguish the cases $\sigma_3\geq\bigl(\frac{|r|}{2}\bigr)^{1/3}$ and $\sigma_3<\bigl(\frac{|r|}{2}\bigr)^{1/3}$.\\

\noindent\textbf{Case~I: $\sigma_3\geq\bigl(\frac{|r|}{2}\bigr)^{1/3}.$}

\proofstep{Step~I.2.}
Set $\gamma := \frac{|r - \det\, M_0|^{1/2}}{\sigma_3^{1/2}}$ and decompose $M_0$ twice along rank-one lines:
\begin{align*}
  M_0 &= \frac{1}{4} \bigl[ M_0 + \gamma (\ee_1 \otimes \ee_2) + \gamma (\ee_2 \otimes \ee_1) \bigr]
    + \frac{1}{4} \bigl[ M_0 + \gamma (\ee_1 \otimes \ee_2) - \gamma (\ee_2 \otimes \ee_1) \bigr] \\
  &\qquad + \frac{1}{4} \bigl[ M_0 - \gamma (\ee_1 \otimes \ee_2) + \gamma (\ee_2 \otimes \ee_1) \bigr]
    + \frac{1}{4} \bigl[ M_0 - \gamma (\ee_1 \otimes \ee_2) - \gamma (\ee_2 \otimes \ee_1) \bigr]. 
\end{align*}
Direct computation yields that two of these four matrices (either those where both $\gamma$'s come with the same sign, or those where the $\gamma$'s have different signs, depending on the sign of $\sigma_3$) have determinant $r$, and the other two have determinant $2\det\, M_0 - r$. We call the former ones $M_{1,G1}$ and $M_{1,G2}$ ($G$ for \textit{good}) and the latter ones $M_{1,B1}$ and $M_{1,B2}$ ($B$ for \textit{bad}), so that we have the decomposition
\begin{align*}
  M_0 = \frac{1}{4} M_{1,B1} + \frac{1}{4} M_{1,G1} + \frac{1}{4} M_{1,G2} + \frac{1}{4} M_{1,B2}
\end{align*}
with
\begin{align*}
  \det\, M_{1,G1} &= \det\, M_{1,G2} = r,  \\
  \det\, M_{1,B1} &= \det\, M_{1,B2} = 2\det\, M_0 - r. 
\end{align*}
Now, if $|\det\, M_{0}|\leq |r|$, it holds that $\frac{|r|}{2}\geq\frac{1}{4}|r-\det\, M_{0}|$ and, taking into account that $\sigma_3\geq\bigl(\frac{|r|}{2}\bigr)^{1/3}$, 
\begin{equation*}
\sigma_3\geq 4^{-1/3}|r-\det\, M_{0}|^{1/3}.
\end{equation*}
On the other hand, if $|\det\, M_{0}|> |r|$, we can use the fact that $\sigma_3\geq|\det\, M_{0}|^{1/3}$ to infer that
\begin{equation*}
\sigma_3\geq|\det\, M_{0}|^{1/3}\geq 2^{-1/3}(|r|+|\det\, M_{0}|)^{1/3}\geq 2^{-1/3}|r-\det\, M_{0}|^{1/3}.
\end{equation*}
This implies that there is a constant $C>0$, independent of $M_{0}$ and $r$, such that in either case 
\begin{equation*}
\sigma_3\geq C|r-\det\, M_{0}|^{1/3}.
\end{equation*}

It then follows that for $J = G1, G2, B1, B2$,
\begin{equation}  \label{eq:M_1X_dist}
  \abs{M_{1,J}-M_0} =  \biggl( 2 \frac{\abs{r - \det\, M_0}}{\sigma_3} \biggr)^{1/2} \leq C \frac{\abs{r - \det\, M_0}^{1/2}}{\abs{r - \det\, M_0}^{1/6}} = C \abs{r - \det\, M_0}^{1/3}
\end{equation}
for a constant $C > 0$ independent of $M_0$ and $r$, and also
\begin{equation}  \label{eq:rdet_dist}
  \abs{r - \det\, M_{1,J}} \leq  2 \abs{r - \det\, M_0}.
\end{equation}

Moreover, the singular value $\sigma_3$ is not altered by this construction, and so there is still a singular value of $M_{1,B1}$ and $M_{1,B2}$ with modulus at least $\bigl(\frac{|r|}{2}\bigr)^{1/3}$. Therefore we may recursively apply the procedure from the preceding steps to decompose $M_{1,B1}$ and $M_{1,B2}$ in turn taking the role of $M_0$. This yields matrices $M_{2,G1},\ldots,M_{2,G4}$, $M_{2,B1},\ldots,M_{2,B4}$ such that
\begin{align*}
  M_{1,B1} = \frac{1}{4} M_{2,G1} + \frac{1}{4} M_{2,G2} + \frac{1}{4} M_{2,B1} + \frac{1}{4} M_{2,B2}, \\
  M_{1,B2} = \frac{1}{4} M_{2,G3} + \frac{1}{4} M_{2,G4} + \frac{1}{4} M_{2,B3} + \frac{1}{4} M_{2,B4},
\end{align*}
and so on.

The laminate which we get after $k$ steps is then given by
\[
\nu_k:= \sum_{i=1}^{k}\sum_{j=1}^{2^i}\frac{1}{4^i}\delta_{M_{i,Gj}} + \sum_{j=1}^{2^{k}}\frac{1}{4^{k}}\delta_{M_{k,Bj}},
\]
where, for all $i$, $j$, $\det\, M_{i,Gj}=r$.

\proofstep{Step~I.3.}
It is clear that each $\nu_k$ satisfies $[\nu_k]=M_0$ and we turn attention to the distance integral in~(iii) of Definition~\ref{def:p-full}. That is,
\begin{equation*}
  \int \abs{A-M_0}^p \dd \nu_k(A) = \sum_{i=1}^{k} \sum_{j=1}^{2^i} \frac{1}{4^i} \abs{M_{i,Gj}-M_0}^p + \sum_{j=1}^{2^{k}} \frac{1}{4^{k}} \abs{M_{k,Bj}-M_0}^p.
\end{equation*}
Let us define $X_i := M_{i,Gj}$, $X_0 := M_0$, and $X_{\ell-1}$ to be the matrix $M_{\ell-1,Bj}$ with $j \in \{1,\ldots,2^{\ell-1}\}$ such that $X_\ell$ is constructed from $X_{\ell-1}$ by laminating as in the previous proof step (with the convention $M_{0,B1} := M_0$); similarly, let $Y_{k} := M_{k,Bj}$, $Y_0 := M_0$, and $Y_{\ell-1}$ defined analogously to $X_{\ell-1}$. Then, $\sum_{\ell=1}^i X_\ell - X_{\ell-1} = M_{i,Gj}-M_0$ and $\sum_{\ell=1}^{k} Y_\ell - Y_{\ell-1} = M_{k,Bj}-M_0$, and by the virtue of the triangle inequality
\begin{equation*}
  \int \abs{A-M_0}^p \dd \nu_k(A) \leq \sum_{i=1}^{k} \sum_{j=1}^{2^i} \frac{1}{4^i} \biggl( \sum_{\ell=1}^i \abs{X_\ell -X_{\ell-1}} \biggr)^p + \sum_{j=1}^{2^{k}} \frac{1}{4^{k}} \biggl( \sum_{\ell=1}^{k} \abs{Y_\ell-Y_{\ell-1}} \biggr)^p.
\end{equation*}

In order to get bounds on $\abs{X_\ell - X_{\ell-1}}$, we use~\eqref{eq:M_1X_dist} and then~\eqref{eq:rdet_dist} recursively.
Thus,
\begin{align*}
  \sum_{\ell=1}^i \abs{X_\ell -X_{\ell-1}} &\leq \sum_{\ell=1}^i C \, \abs{r - \det\, X_{\ell-1}}^{1/3} \leq \sum_{\ell=1}^i C 2^{(\ell-1)/3} \, \abs{r - \det\, M_0}^{1/3} \\
  &\leq \frac{C \, \abs{r - \det\, M_0}^{1/3}}{2^{1/3}-1} 2^{i/3}
\end{align*}
and a similar estimate holds for the sum involving the $Y_\ell$'s with $i$ replaced by $k$. Hence, 
\begin{align}
 \int \abs{A-M_0}^p \dd \nu_k(A) &\leq C \abs{r - \det\, M_0}^{p/3} \left[ \sum_{i=1}^{k} (2^{p/3 - 1})^i + (2^{p/3 - 1})^{k}  \right]\nonumber \\
  &\leq C  \abs{r - \det\, M_0}^{p/3} \biggl[ \frac{1}{1-\rho} + \rho^{k} \biggr] \nonumber\\
  &\leq C_p \abs{r - \det\, M_0}^{p/3},
  \label{eq:geom_prop1}
\end{align}
where $\rho:=2^{p/3 - 1}<1$ (since $p < 3$) and $C_p>0$ is a constant depending on $p$ (but not on $M_0$ or $r$) which blows up as $p\to 3$.
Also, each $\nu_k$ is a probability measure and by \eqref{eq:geom_prop1} we deduce that
\begin{align}
 \int \abs{A}^p \dd \nu_k(A) & \leq 2^p \left[\int \abs{A-M_0}^p \dd \nu_k(A) + \abs{M_0}^p \right]\nonumber\\
  & \leq 2^p C_p \abs{r - \det\, M_0}^{p/3} + 2^p \abs{M_0}^p.
  \label{eq:geom_prop2}
\end{align}

Observe moreover that the mass of $\nu_k$ carried by the matrices outside $S_R$ is
\begin{equation}\label{eq:mass}
 \nu_k \bigl( \setb{ A \in \R^{3 \times 3} }{ \det\, A \neq r } \bigr) = \frac{2^{k}}{4^{k}} \to 0  \qquad\text{as $k\to\infty$} 
\end{equation}

\noindent\textbf{Case~II: $\sigma_3<\bigl(\frac{|r|}{2}\bigr)^{1/3}$.}

\proofstep{Step~II.2.} Again, we assume that $M_0$ is given by
\[
  M_0 := \mathrm{diag}(\sigma_1,\sigma_2,\pm\sigma_3)
\]
with $0<\sigma_1\leq\sigma_2\leq\sigma_3$, but now $\sigma_3<\bigl(\frac{|r|}{2}\bigr)^{1/3}$. We decompose $M_0$ along a rank-one line as
\[
M_0=\frac{1}{2}\left[M_0+\delta(e_3\otimes e_3)\right]+\frac{1}{2}\left[M_0-\delta(e_3\otimes e_3)\right]
=:\frac{1}{2}M_0^++\frac{1}{2}M_0^-,
\]
where we choose $\delta=2\bigl(\frac{|r|}{2}\bigr)^{1/3}$. Then, the singular values $\sigma_3+\delta$ and $\sigma_3-\delta$ of $M_0^+$ and $M_0^-$, respectively, have absolute value at least $\bigl(\frac{|r|}{2}\bigr)^{1/3}$. Moreover, we have the estimates
\begin{align}
\abs{M_0^{\pm} - M_0} &= 2\biggl(\frac{|r|}{2}\biggr)^{1/3}\leq 2|r-\det\, M_0|^{1/3} \label{eq:dist+-},\\
\abs{r - \det\, M_0^{\pm}} &\leq 3\abs{r - \det\, M_0}. \label{eq:rdet_dist+-}
\end{align}
Indeed, both inequalities follow from the observation that $|\det\, M_0|\leq\sigma_3^3<|r|/2$, and therefore $|r-\det\, M_0|>|r|/2$.

\proofstep{Step~II.3.}
We can treat $M_0^{\pm}$ exactly as in Case~I, which is now applicable to $M_0^+$ and $M_0^-$. This gives us two sequences of laminates $\nu_k^+$ and $\nu_k^-$ with $[\nu_k^{\pm}]=M_0^{\pm}$, 
\begin{equation}\label{eq:mass2}
\nu^\pm_k \bigl( \setb{ A \in \R^{3 \times 3} }{ \det\, A \neq r } \bigr) \to 0
\end{equation}
as $k\to\infty$, and the estimate
\begin{equation}\label{eq:pmestimate}
\int|A-M_0^{\pm}|^p\dd\nu_k^{\pm}(A)\leq C_p|r-\det\, M_0^{\pm}|^{p/3}
\end{equation} 
for $1\leq p<3$, where $C_p$ does not depend on $k$, $M_0$ or $r$. It follows that the measure $\nu_k$ defined by $\nu_k=\frac{1}{2}\nu_k^++\frac{1}{2}\nu_k^-$ satisfies $[\nu_k]=M_0$ and $\nu_k \bigl( \setb{ A \in \R^{3 \times 3} }{ \det\, A \neq r } \bigr) \to 0
$. Moreover, combining~\eqref{eq:pmestimate} with~\eqref{eq:dist+-} and~\eqref{eq:rdet_dist+-}, we have 
\begin{align}
\int|A-M_0|^p\dd\nu_k(A)&= \frac{1}{2}\int|A-M_0|^p\dd\nu_k^+(A)+\frac{1}{2}\int|A-M_0|^p\dd\nu_k^-(A) \nonumber\\
&\leq C_p\biggl[|M_0^+-M_0|^p+|M_0^--M_0|^p \nonumber\\
& \qquad\qquad +\int |A-M_0^+|^p\dd\nu_k^+(A) +\int |A-M_0^-|^p\dd\nu_k^-(A)\biggr] \nonumber\\
&\leq C_p\left[|r-\det\, M_0|^{p/3}+|r-\det\, M_0^{\pm}|^{p/3}\right] \nonumber\\
&\leq C_p|r-\det\, M_0|^{p/3}.\label{eq:pbounded2}
\end{align}

\proofstep{Step~4.} In this last step, let $M(x)=\nabla u(x)$ for some $u\in\Wrm^{1,p}(\Omega;\R^d)$ and $r\in\Lrm^{p/d}(\Omega)$. Applying the previous steps to $M_0=M(x)$ and $r=r(x)$ for almost every $x$, we obtain a sequence $((\nu_{x,k})_{x\in\Omega})_{k\in\Nbb}$ of parametrized probability measures. Note that, for every $k$, $(\nu_{x,k})_x$ is weakly* measurable. Indeed, $M:\Omega\to\Rbb^{d\times d}$ is measurable by assumption, and the matrices obtained from the rank-one splittings of $M_0$ in the previous steps depend continuously on $M_0$ (to be more precise, there is a discontinuity at $\sigma_3=(|r|/2)^{1/3}$, which is where Cases I and II bifurcate, thus rendering the dependence of the matrices $M_{i,Gj}$, $M_{i,B_j}$ on $M_0$ only \emph{piecewise} continuous). 

Moreover, by the bounds~\eqref{eq:geom_prop2} and~\eqref{eq:pbounded2}, we obtain that $(\nu_{x,k})_k$ is a bounded sequence in the space $\Lrm_w^{\infty}(\Omega;\Mbf^1(\Rbb^{d\times d}))$ of weakly* measurable maps from $\Omega$ into $\Mbf^1(\Rbb^{d\times d})$. Therefore (cf.~\cite{Mull99VMMP}, Sections 3.1 and 3.4), there exists a subsequence (not relabeled) and a Young measure $\nu=(\nu_x)_x\in\Lrm_w^{\infty}(\Omega;\Mbf^1(\Rbb^{d\times d}))$ such that
\begin{equation}\label{eq:YMconvergence}
\int_{\Omega}\int_{\Rbb^{d\times d}}f(x,A)\dd \nu_{x,k}(A)\dd x\to \int_{\Omega}\int_{\Rbb^{d\times d}}f(x,A)\dd \nu_x(A)\dd x
\end{equation}
as $k\to\infty$, for every Carath\'{e}odory function $f:\Omega\times\Rbb^{d\times d}\to\Rbb$ such that the family $\left(\int f(x,A)\dd\nu_{x,k}(A)\right)_k$ is equiintegrable. 

We claim that $(\nu_x)_x$ is a $p$-gradient Young measure that satisfies (i)-(iii) from Definition~\ref{def:p-full}, which then implies the proposition. First, a standard diagonal argument together with the bounds~\eqref{eq:geom_prop2} and~\eqref{eq:pbounded2} implies that indeed $\nu$ is a $p$-gradient Young measure.

Property (i) follows from the fact that $[\nu_{x,k}]=M(x)$ for almost every $x$, and from~\eqref{eq:YMconvergence} with $f(x,A)=\psi(x)A$ for any $\psi\in \Lrm^\infty(\Omega)$. Varying over $\psi$ then gives $[\nu_x]=M(x)$ almost everywhere.

Property (ii) is a consequence of~\eqref{eq:mass},~\eqref{eq:mass2} as well as the choice $f(x,A)=\psi(x)\ONE_{\{M\,:\,\det\, M\neq r(x)\}}(A)$ in~\eqref{eq:YMconvergence} (the characteristic function is lower semicontinuous with respect to $A$, which makes it admissible as a test function; see~\cite{Mull99VMMP}, Section 3.4).

Finally, (iii) is a consequence of~\eqref{eq:geom_prop2} and~\eqref{eq:pbounded2} in conjunction with~\eqref{eq:YMconvergence} using $f(x,A)=|A-M(x)|^p$. Notice that the equiintegrability of $\left(\int f(x,A)\dd\nu_{x,k}(A)\right)_k$ for this choice of $f$ follows from~\eqref{eq:geom_prop1} and~\eqref{eq:pbounded2} and the assumptions on $r$ and $M$. 

\end{proof}

\subsection{Arbitrary dimensions}
\label{sc:arbitrary_dim}

In this part, we briefly outline the proof of Proposition~\ref{prop:geometry} for arbitrary dimensions. The cases $d=3$ and $d>3$ are quite similar, so that we only provide the basic estimates, everything else remaining the same.
\begin{proof}[Proof of Proposition~\ref{prop:geometry} for $d>3$] \proofstep{Step 1.} As before we bring a matrix $M_0\in \R^{d\times d}$ into diagonal form and write
\[
  M_0 := \mathrm{diag}(\sigma_1,\sigma_2,\ldots,\pm\sigma_d),
\]
for which $0\leq\sigma_1\leq\sigma_2\leq\ldots\leq\sigma_d$.

We now distinguish the cases $\sigma_3\cdots\sigma_d\geq\bigl(\frac{|r|}{2}\bigr)^{(d-2)/d}$ and $\sigma_3\cdots\sigma_d<\bigl(\frac{|r|}{2}\bigr)^{(d-2)/d}$.\\

\noindent\textbf{Case~I: $\sigma_3\cdots\sigma_d\geq\bigl(\frac{|r|}{2}\bigr)^{(d-2)/d}$.}

\proofstep{Step~I.2.}
Set $\gamma := \frac{|r - \det\, M_0|^{1/2}}{(\sigma_3\cdots\sigma_d)^{1/2}}$ and decompose $M_0$ twice along rank-one lines:
\begin{align*}
  M_0  &= \frac{1}{4} \bigl[ M_0 + \gamma (\ee_1 \otimes \ee_2) + \gamma (\ee_2 \otimes \ee_1) \bigr]
    + \frac{1}{4} \bigl[ M_0 + \gamma (\ee_1 \otimes \ee_2) - \gamma (\ee_2 \otimes \ee_1) \bigr] \\
  &\qquad + \frac{1}{4} \bigl[ M_0 - \gamma (\ee_1 \otimes \ee_2) + \gamma (\ee_2 \otimes \ee_1) \bigr]
    + \frac{1}{4} \bigl[ M_0 - \gamma (\ee_1 \otimes \ee_2) - \gamma (\ee_2 \otimes \ee_1) \bigr] \\
  &=: \frac{1}{4} M_{1,B1} + \frac{1}{4} M_{1,G1} + \frac{1}{4} M_{1,G2} + \frac{1}{4} M_{1,B2},
\end{align*}
where the ``good'' and the ``bad'' matrices are again labeled such that
\begin{align*}
  \det\, M_{1,G1} &= \det\, M_{1,G2} = r,  \\
  \det\, M_{1,B1} &= \det\, M_{1,B2} = 2\det\, M_0 - r. 
\end{align*}

If $|\det\, M_{0}|\leq |r|$, it holds that $\frac{|r|}{2}\geq\frac{1}{4}|r-\det\, M_{0}|$ and, taking into account that $\sigma_3\cdots\sigma_d\geq \bigl(\frac{|r|}{2}\bigr)^{(d-2)/d}$, 
\[
\sigma_3\cdots\sigma_d\geq \left(\frac{1}{4}\right)^{(d-2)/d}|r-\det\, M_{0}|^{(d-2)/d}.
\]
If however $|\det\, M_{0}|> |r|$, we need a more involved estimate: Note that
\begin{equation}\label{eq:star_d}
(\sigma_1\sigma_2)^{d-2} \leq (\sigma_3\cdots\sigma_d)(\sigma_3\cdots\sigma_d)=(\sigma_3\cdots\sigma_d)^2.
\end{equation}
Then, through \eqref{eq:star_d}, we obtain that
\[
|\det\, M_{0}|^{(d-2)/d}\leq (\sigma_3\cdots\sigma_d)^{2/d}(\sigma_3\cdots\sigma_d)^{(d-2)/d}=\sigma_3\cdots\sigma_d,
\]
i.e.
\[
(\sigma_3\cdots\sigma_d)^{d/(d-2)}\geq|\det\, M_{0}|\geq \frac{1}{2}(|r|+|\det\, M_{0}|)\geq \frac{1}{2}|r-\det\, M_{0}|.
\]
This shows that there is a constant $C>0$, independent of $M_{0}$ and $r$, such that in either case 
\[
\sigma_3\cdots\sigma_d\geq C|r-\det\, M_{0}|^{(d-2)/d}.
\]
Then the following estimates hold:
\begin{align*} 
  \abs{M_{1,J}-M_0} &=  \biggl( 2 \frac{\abs{r - \det\, M_0}}{\sigma_3\cdots\sigma_d} \biggr)^{1/2} \leq C \frac{\abs{r - \det\, M_0}^{1/2}}{|r-\det\, M_0|^{(d-2)/2d}}= C\abs{r - \det\, M_0}^{1/d},\\
  \abs{r - \det\, M_{1,J}} &\leq  2 \abs{r - \det\, M_0}.
\end{align*}

We may now proceed as in the case $d=3$.

\noindent\textbf{Case~II: $\sigma_3\cdots\sigma_d<\bigl(\frac{|r|}{2}\bigr)^{(d-2)/d}$.}

\proofstep{Step~II.2.} We may still assume that $M_0$ is given by
\[
  M_0 := \mathrm{diag}(\sigma_1,\sigma_2,\ldots,\pm\sigma_d)
\]
with $0<\sigma_1\leq\sigma_2\leq\ldots\leq\sigma_d$, but now $\sigma_3\cdots\sigma_d<\bigl(\frac{|r|}{2}\bigr)^{(d-2)/d}$. We decompose $M_0$ along rank-one lines as
\[
\begin{aligned}
M_0&=\frac{1}{2}\left[M_0+\delta(e_3\otimes e_3)\right]+\frac{1}{2}\left[M_0-\delta(e_3\otimes e_3)\right]\\
&=:\frac{1}{2}M_0^++\frac{1}{2}M_0^-,
\end{aligned}
\]
where we choose $\delta=2\bigl(\frac{|r|}{2}\bigr)^{1/d}$. Then, the singular values $\sigma_3+\delta$ and $\sigma_3-\delta$ of $M_0^+$ and $M_0^-$, respectively, have absolute value at least $\bigl(\frac{|r|}{2}\bigr)^{1/d}$. Note that by \eqref{eq:star_d}
\[
|\det\, M_0|\leq (\sigma_3\cdots\sigma_d)^{2/(d-2)}\sigma_3\cdots\sigma_d\leq \biggl(\frac{|r|}{2}\biggr)^{2/d}\biggl(\frac{|r|}{2}\biggr)^{(d-2)/d}=\frac{|r|}{2}.
\]
Therefore,
\[
\abs{M_0^{\pm} - M_0} =  \delta = 2 \left(\frac{|r|}{2}\right)^{1/d}\leq 2\abs{r - \det\, M_0}^{1/d},
\]
and
\[
\abs{r - \det\, M_0^{\pm}}\leq \abs{r - \det\, M_0} + \abs{\det\, M_0 - \det\, M_0^{\pm}}.
\]
But
\begin{align*}
\abs{\det\, M_0 - \det\, M_0^{\pm}} &= \abs{\delta\sigma_1\sigma_2\sigma_4\cdots\sigma_d}\nonumber\\
&\leq 2\left(\frac{|r|}{2}\right)^{1/d}\sigma_3\sigma_3\cdots\sigma_d\nonumber\\
&\leq 2\left(\frac{|r|}{2}\right)^{1/d}\left(\frac{|r|}{2}\right)^{1/d}\biggl(\frac{|r|}{2}\biggr)^{(d-2)/d}\nonumber\\
&= |r| \leq 2\abs{r - \det\, M_0}^{1/d},
\end{align*}
where we have used the fact that
\[
\sigma_3^{d-2}\leq \sigma_3\cdots\sigma_d\leq\biggl(\frac{|r|}{2}\biggr)^{(d-2)/d}
\]
and $|\det\, M_0|\leq \abs{r}/2$.

If $|\sigma_3\pm\delta|\sigma_4\cdots\sigma_d\geq\bigl(\frac{|r|}{2}\bigr)^{(d-2)/d}$, we can continue as in Case I. If not, we repeat the argument of Step II.2 (after reordering the singular values), which can be done exactly as above. It is easy to see that after at most $(d-2)$ steps, we are in the situation of Case I. 
\end{proof}

\section{Applications}\label{sc:applications}

In the following we give precise statements and proofs of the applications mentioned in the introduction. 

\subsection{Characterization of Young measures}

We first prove our main theorem:

\begin{theorem}\label{thm:main2}
Let $1 < p < d$. Suppose that $\Omega \subset \R^d$ is open and bounded, $|\partial\Omega|=0$, and let $\nu = (\nu_x)_{x\in\Omega} \subset \Mbf^1(\R^{d \times d})$ be a $p$-Young measure. Moreover let $J_1 \colon \Omega \to [-\infty,+\infty)$, $J_2 \colon \Omega \to (-\infty,+\infty]$ be measurable and such that $J_1(x)\leq J_2(x)$ for a.e.~$x\in\Omega$. Also, assume that for $i=1,2$,
\begin{equation*}
\int_{\Omega}J_1^+(x)^{p/d} \dd x<\infty
\qquad\text{and}\qquad
\int_{\Omega}J_2^-(x)^{p/d} \dd x<\infty,
\end{equation*} 
where $J_i^{\pm}$ denotes the positive or negative part of $J_i$, respectively. Then the following statements are equivalent:
\begin{itemize}
  \item[(i)] There exists a sequence of gradients $(\nabla u_j) \subset \Lrm^p(\Omega;\R^{d \times d})$ that generates $\nu$, such that 
\[
  \qquad J_1(x) \leq \det \nabla u_j(x) \leq J_2(x) \quad\text{for all $j\in\N$ and a.e.~$x\in\Omega$. }
\]
  \item[(ii)] The conditions (I)-(IV) hold:
\begin{itemize}
\item[(I)] $\displaystyle\int_{\Omega}\int\abs{A}^p \dd \nu_x(A)<\infty$;
\item[(II)] the barycenter $[\nu](x) := \int A \dd \nu_x(A)$ is a gradient, i.e.\ there exists $\nabla u \in \Lrm^p(\Omega;\R^{d \times d})$ with $[\nu] = \nabla u$ a.e.;
\item[(III)] for every quasiconvex function $h \colon \R^{d \times d} \to \R$ with $\abs{h(A)}\leq c(1+\abs{A}^p)$, the Jensen-type inequality
\[
  \qquad\qquad h(\nabla u(x)) \leq  \int h(A) \dd \nu_x(A)  \qquad\text{holds for a.e.\ $x \in \Omega$;}
\]
\item[(IV)] $\supp{\nu_x} \subset \setb{ A \in \R^{d \times d} }{ J_1(x) \leq \det\, A \leq J_2(x) }$ for a.e.\ $x \in \Omega$.
\end{itemize}
\end{itemize}
Furthermore, in this case the sequence $(u_j)$ can be chosen such that $(\nabla u_j)$ is $p$-equiintegrable and $u_j - u\in\Wrm^{1,p}_0(\Omega,\mathbb{R}^d)$, where $u\in\Wrm^{1,p}(\Omega,\R^d)$ is the deformation underlying $\nu$ (i.e.\ the function whose gradient is the barycenter of $\nu$).
\end{theorem}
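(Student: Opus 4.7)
The plan is to observe that Theorem~\ref{thm:main2} is precisely the specialization of the abstract result Theorem~\ref{thm:main_abstract} to the constraint function
\[
  R(x,A) := \max\bigl\{ J_1(x) - \det\, A,\ \det\, A - J_2(x),\ 0 \bigr\},
\]
whose zero-sublevel set is exactly
\[
  S_{R(x,\frarg)} = \setb{ A \in \R^{d\times d} }{ J_1(x) \leq \det\, A \leq J_2(x) }.
\]
Hence the strategy is, first, to verify that this $R$ falls within the framework of Section~\ref{sc:convex_int}; second, to invoke the geometric result of Section~\ref{sc:geometry} to supply the tightness hypothesis; and third, to read off the conclusion from Theorem~\ref{thm:main_abstract}.

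For the first step, I would check that $R \in \Rcal^{p,d}(\Omega;\R^{d\times d})$ with $q=d$: using the elementary bound
\[
  0 \leq R(x,A) \leq J_1^+(x) + J_2^-(x) + C \abs{A}^d,
\]
we can take $\kappa(x) := J_1^+(x) + J_2^-(x)$, which lies in $\Lrm^{p/d}(\Omega)$ by the integrability hypotheses on $J_1^+$ and $J_2^-$. Moreover $R$ is a Carath\'eodory function since $(x,A) \mapsto \det\, A$ is continuous in $A$, and $J_1, J_2$ are measurable in $x$, so the max of three such functions is Carath\'eodory.

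For the second step, Corollary~\ref{cor:geometry} tells us that $\R^{d\times d}$ is tightly contained in the $p$-quasiconvex hull of $(S_{R(x,\frarg)})_{x\in\Omega}$ uniformly in $x$, for every $1 \leq p < d$. This is the crucial geometric input and is where the real work of the paper was done; the condition $p < d$ enters precisely because the determinant has $d$-growth, so no tightening beyond $p < d$ can succeed by the Jensen-inequality argument of Remark~\ref{rk:remark_qc_hull}(3).

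For the third step, with these two hypotheses in hand, we apply Theorem~\ref{thm:main_abstract} with the above choice of $R$, $p$, and $q = d$. The equivalence of~(i) and (ii) in Theorem~\ref{thm:main2} then follows: condition~(IV) of Theorem~\ref{thm:main_abstract}, namely $\supp \nu_x \subset S_{R(x,\frarg)}$ a.e., coincides with condition~(IV) of Theorem~\ref{thm:main2}; and conditions~(I)--(III) are identical in both statements (they are the Kinderlehrer--Pedregal conditions). The final clause on $p$-equiintegrability of $(\nabla u_j)$ and on $u_j - u \in \Wrm^{1,p}_0(\Omega;\R^d)$ is inherited verbatim from the concluding sentence of Theorem~\ref{thm:main_abstract}.

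Since all the hard analysis (the inductive laminate construction of Section~\ref{sc:geometry} and the convex integration argument of Proposition~\ref{prop:convexint}) is already packaged into the abstract statement and its geometric verification, there is no genuine obstacle left; the proof at this stage is a matter of bookkeeping. The only point worth stating carefully is the verification of the growth condition defining $\Rcal^{p,d}$, since this is what determines the integrability assumption $J_1^+, J_2^- \in \Lrm^{p/d}(\Omega)$ appearing in the hypotheses and explains why exactly these exponents feature.
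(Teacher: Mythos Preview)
Your proposal is correct and follows essentially the same approach as the paper: the paper's own proof is a single line invoking Theorem~\ref{thm:main_abstract} and Corollary~\ref{cor:geometry}, and you have unpacked precisely those ingredients, including the verification that $R\in\Rcal^{p,d}(\Omega;\R^{d\times d})$ (which the paper records at the start of Section~\ref{sc:geometry}).
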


\begin{proof}
The result follows by Theorem~\ref{thm:main_abstract} and Corollary~\ref{cor:geometry}.
\end{proof}

Let us also state a refinement of the sufficiency part of the preceding theorem (and also of the main result of~\cite{KoRiWi13OPYM}):

\begin{theorem} \label{thm:kappa_equiint}
For $1 < p < d$ and a bounded open Lipschitz domain $\Omega \subset \R^d$, let $\kappa$ be a \term{singular growth modulus}, that is, a convex function $\kappa \colon (0,\infty) \to [0,\infty)$ with $\kappa(s) \to +\infty$ as $s \to 0$, which we extend by setting $\kappa(s) := +\infty$ for $s\leq 0$. Assume furthermore that we are given a Young measure $\nu = (\nu_x)_{x\in\Omega} \subset \Mbf^1(\R^{d \times d})$ satisfying~(I)--(III) from Theorem~\ref{thm:main2} as well as
\begin{equation} \label{eq:kappa_cond}
  \int \kappa( \det\, A ) \dd \nu_x(A) < \infty
  \qquad \text{for a.e.\ $x \in \Omega$.}
\end{equation}
Then, there exists a sequence of gradients $(\nabla u_j) \subset \Lrm^p(\Omega;\R^{d \times d})$ that generates $\nu$ and such that 
\begin{equation} \label{eq:kappa_equiint}
  \bigl\{ \abs{\nabla u_j}^p + \kappa( \det \nabla u_j ) \bigr\}_j
  \quad\text{is an equiintegrable family.}
\end{equation}
\end{theorem}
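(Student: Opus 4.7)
The plan is to combine the convex-integration scheme of Proposition~\ref{prop:convexint} with a careful choice of initial generating sequence and a diagonal tuning of a positive lower bound on the Jacobian. Set $g(x) := \int \kappa(\det A) \dd \nu_x(A)$, which is finite a.e.\ by hypothesis. First I would establish the pointwise bound $\kappa(\det \nabla u(x)) \leq g(x)$ a.e., which in particular forces $\det \nabla u > 0$ a.e.\ since $\kappa \equiv +\infty$ on $(-\infty, 0]$. This follows by applying the Jensen-type inequality (III) to the bounded polyconvex truncations $A \mapsto \min(\kappa(\det A), M)$ (polyconvex because $\kappa_M := \min(\kappa, M)$ is convex and $\det$ is quasi-affine, and satisfying the $p$-growth trivially by being bounded by $M$), and sending $M \to \infty$ via monotone convergence.

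Next, since $\kappa(s) \to \infty$ as $s \to 0^+$, I would pick $c_n \downarrow 0^+$ with $\kappa(s) \leq n$ for all $s \geq c_n$, and produce a $p$-equiintegrable generating sequence $(\nabla w_j) \subset \Lrm^p$ for $\nu$ with $w_j - u \in \Wrm^{1,p}_0$ for which $\{\kappa(\det \nabla w_j)\}_j$ is \emph{already} equiintegrable. This alignment is obtained by combining the Kinderlehrer--Pedregal theorem with Young-measure representation for the bounded Carath\'eodory test functions $\min(\kappa \circ \det, n)$, followed by a diagonal selection. The Jensen inequality of the previous step ensures that $\int \kappa_M(\det \nabla w_j)\dd x \to \int g_M \dd x$ as $j \to \infty$ (with $g_M := \int \kappa_M(\det A)\dd \nu_x(A)$), and letting $M \to \infty$ while exploiting $g_M \uparrow g$ via monotone convergence yields the desired alignment in a Balder-type fashion.

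To enforce the pointwise positivity, I would then choose $n(j) \uparrow \infty$ growing slowly enough that $|B_j| \to 0$ and $n(j)|B_j| \to 0$, where $B_j := \{x : \det \nabla w_j(x) < c_{n(j)}\}$; the feasibility of this choice follows from the Portmanteau theorem applied to the closed sets $\{\det A \leq c\}$, using $\nu_x(\{\det A \leq 0\}) = 0$ a.e. Applying Proposition~\ref{prop:convexint} to each $w_j$ with constraint function
\[
R_j(x, A) := \max\{c_{n(j)} - \det A,\, 0\} \in \Rcal^{p,d},
\]
whose zero-sublevel set $\{A : \det A \geq c_{n(j)}\}$ contains $\R^{d\times d}$ tightly in its $p$-quasiconvex hull uniformly in $x$ by Corollary~\ref{cor:geometry}, yields $v_j \in \Wrm^{1,p}(\Omega; \R^d)$ with $\det \nabla v_j \geq c_{n(j)}$ a.e., $v_j - u \in \Wrm^{1,p}_0$, and $\|\nabla v_j - \nabla w_j\|_p^p \leq C\, c_{n(j)}^{p/d}\, |B_j| \to 0$; hence $(\nabla v_j)$ is $p$-equiintegrable and still generates $\nu$. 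Equiintegrability of $\{\kappa(\det \nabla v_j)\}_j$ then splits as follows: on $B_j$, the bound $\det \nabla v_j \geq c_{n(j)}$ together with the decreasing behaviour of $\kappa$ near $0$ forces $\kappa(\det \nabla v_j) \leq \kappa(c_{n(j)}) = n(j)$, contributing at most $n(j)|B_j| \to 0$ in $\Lrm^1$; on $\Omega \setminus B_j$, $\nabla v_j$ agrees with $\nabla w_j$ up to an $\Lrm^p$-vanishing perturbation, so $\kappa(\det \nabla v_j)$ inherits equiintegrability from the aligned $(\nabla w_j)$.

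The principal obstacle lies in the second step, namely constructing the aligned generating sequence $(\nabla w_j)$: the extended-real-valuedness of $\kappa$ (blowing up at $0$) means the standard Kinderlehrer--Pedregal construction does not immediately deliver equiintegrability of $\kappa(\det \nabla w_j)$. Overcoming this requires combining the Jensen inequality from the first step (via polyconvexity of the bounded truncations $\kappa_M \circ \det$) with a diagonal Young-measure extraction in the spirit of Balder's selection lemma, carried out in the gradient-Young-measure setting under a polyconvex test functional; everything downstream is then a reasonably straightforward application of the paper's convex-integration machinery.
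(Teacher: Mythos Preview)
Your proposal contains two genuine gaps. First, Step~1 is wrong on both counts: the truncation $\kappa_M := \min(\kappa, M)$ is \emph{not} convex in general (for $\kappa(s) = 1/s$ the one-sided derivatives at $s = 1/M$ are $0$ and $-M^2$, so the derivative jumps down), hence $\kappa_M \circ \det$ need not be polyconvex; and the Jensen-type conclusion $\kappa(\det [\nu_x]) \leq g(x)$ is in fact \emph{false} for $p < d$. Indeed, by Proposition~\ref{prop:geometry} there exist homogeneous gradient $p$-Young measures supported in $\{\det = 1\}$ with barycenter $M_0$ satisfying $\det M_0 \leq 0$, whence $\kappa(\det M_0) = +\infty > \kappa(1)$. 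This is exactly the failure of $p$-growth quasiconvexity of $\kappa \circ \det$ that the paper is built around.

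Second, and more critically, applying Proposition~\ref{prop:convexint} \emph{globally} in Steps~3--5 does not yield $\nabla v_j = \nabla w_j$ on $\Omega \setminus B_j$; it only gives an $\Lrm^p$-bound on the difference, which says nothing about $\kappa(\det \nabla v_j)$ pointwise there, so your ``inheritance of equiintegrability'' is unjustified. Moreover your Step~2 is circular: a generic Kinderlehrer--Pedregal generating sequence has $\det \nabla w_j \leq 0$ on a set of positive measure, making $\kappa(\det \nabla w_j) \equiv +\infty$ there, and no diagonal extraction can render this equiintegrable before positivity of the Jacobian has been enforced. The paper's proof circumvents both issues by a localization device (carried out for the homogeneous measure $\nu_{x_0}$): the generating sequence $(v_j)$ is taken \emph{smooth}, so that between the open set $\{\det \nabla v_j < 1/(k+1)\}$ and the closed set $\{\det \nabla v_j \leq 1/k\}$ one can interpolate an open set $D_j$ with Lipschitz boundary; Proposition~\ref{prop:convexint} is applied \emph{only on $D_j$} and the result is glued to the unchanged $v_j$ outside. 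This produces exact agreement $\nabla w_j = \nabla v_j$ on the complement of $D_j$, where $\det \nabla v_j \geq 1/(k+1)$ by construction, and the equiintegrability then follows from the tail estimate $\kappa(1/k)\,\nu_{x_0}(\{\det A \leq 1/k\}) \leq \int_{\{\det A \leq 1/k\}} \kappa(\det A) \dd\nu_{x_0}(A) \to 0$ as $k\to\infty$.
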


\begin{proof}
The condition~\eqref{eq:kappa_cond} entails in particular that $\nu_{x_0}( \setn{ A \in \R^{d \times d}}{ \det\, A > 0 })=1$ for almost every $x_0\in\Omega$. Fix such an $x_0 \in \Omega$ and denote by $A_0 := [\nu_{x_0}]  = \nabla u(x_0) \in \R^{d \times d}$ the barycenter of $\nu_{x_0}$. Let $(v_j) \subset (\Wrm^{1,p} \cap \Crm^\infty)(\Bbb^d;\R^d)$ such that $(\nabla v_j)$ is a $p$-equiintegrable generating sequence for $\nu_{x_0}$ satisfying the additional constraint $v_j(y) = [\nu_{x_0}](y) = A_0 y$ for $y \in \partial \Bbb^d$; the existence of this sequence follows from standard Young measure results, see for example Lemmas~8.3 and~8.15 in~\cite{Pedr97PMVP}.

Let $n \in \N$ and select $k = k(n) \in \N$ so large that $k(n) \geq n$, $\kappa(1/k) \geq 1$, and
\[
  \kappa(1/k) \cdot \nu_{x_0} \bigl( \setb{ A \in \R^{d \times d} }{ \det\, A \leq 1/k } \bigr)
  \leq \int_{\{\det\, A \leq 1/k\}} \kappa( \det\, A ) \dd \nu_{x_0}(A)
  \leq \frac{1}{n}.
\]
Here we have used implicitly that $\kappa$ is decreasing on an interval $(0,s_0)$ with $s_0 > 0$ since it is convex and $\kappa(s) \to +\infty$ as $s \to 0$; choose $k \geq 1/s_0$.

Using the Young measure representation of limits and discarding some elements at the beginning of the sequence $(v_j)$ if necessary, we may pick $j = j(n)$ such that with $\omega_d := \abs{\Bbb^d}$ the following two conditions hold:
\begin{align}
  &\int_{E_j} \, \absBB{ \frac{1}{k} - \det\, \nabla v_j }^{p/d}\dd y + \abs{E_j}
    \leq \frac{4 \omega_d}{\kappa(1/k) n}
    \leq \frac{C}{n}
    \qquad\text{and} 		\label{eq:YM_conv1} \\
  &\int_{\{ 1/(k+1) \leq \det\, \nabla v_j \leq 1/\ell \} } \, \kappa( \det\, \nabla v_j ) \dd y \label{eq:YM_conv2}\\
  &\qquad \leq  \omega_d \int_{\{ 1/(k+1) \leq \det\, A \leq 1/\ell \}} \kappa( \det\, A ) \dd \nu_{x_0}(A) + \frac{1}{n} \notag
\end{align}
for all $\ell = 1,\ldots,k = k(n)$, where
\[
  E_j := \setb{ y \in \Bbb^d }{ \det\, \nabla v_j(y) \leq 1/k }
\]
and $C = C(d)$ is a dimensional constant. For~\eqref{eq:YM_conv2} we used the Young measure upper semi-representation for the \emph{bounded} upper semicontinuous integrand $g(A) := \ONE_{\{ 1/(k+1) \leq \det\, A \leq 1/\ell \}} \cdot \kappa( \det\, A )$,
\[
  \limsup_{j\to\infty} \int_{\Bbb^d} g(\nabla v_j(y)) \dd y \leq \int_{\Bbb^d} \int g(A) \dd \nu_{x_0}(A) \dd x.
\]
Since the above are only finitely many conditions, they can be satisfied by discarding only finitely many leading terms in the sequence $(v_j)$. As an immediate consequence, however, the assertion~\eqref{eq:YM_conv2} in fact holds for all $\ell \in N$ since for $\ell > k+1$ it is trivially satisfied.

Next, we choose an open set $D_j$ with Lipschitz boundary and such that
\begin{align*}
  B_j &:= \setb{ y \in \Bbb^d }{ \det\, \nabla v_j(y) < 1/(k+1) } \\
  &\phantom{:}\subset D_j \subset \setb{ y \in \Bbb^d }{ \det\, \nabla v_j(y) \leq 1/k } = E_j.
\end{align*}
This is always possible: since $\nabla v_j$ is continuous, $\partial B_j$ and $\partial E_j$ can meet only in $\partial \Bbb^d$, so we can construct $D_j$ with a Lipschitz (or even smooth) boundary. Invoking Proposition~\ref{prop:convexint} for the function $v_j$ \emph{restricted to the set} $D_j$, we get a new function $w_j \in \Wrm^{1,p}(D_j;\R^d)$ with
\[
  \det\, \nabla w_j \geq 1/k \quad\text{a.e.\ in $D_j$},  \qquad
  w_j = v_j  \quad\text{on $\partial D_j$,}
\]
where as usual the boundary assertion is to be understood in the sense of trace. If $\partial D_j$ intersects $\partial \Bbb^d$, the boundary assertion is to include $w_j(y) = v_j(y) = A_0y$ for $y \in \partial D_j \cap \partial \Bbb^d$. Moreover, we have
\[
  \norm{\nabla w_j - \nabla v_j}^p_{\Lrm^p(D_j;\R^{d \times d})} \leq C_p  \int_{E_j} \, \absBB{ \frac{1}{k} - \det\, \nabla v_j(y) }^{p/d} \dd y \leq \frac{C_p}{n},
\]
where the constant $C_p = C(d,p)$ changes from expression to expression.

Now extend our new $w_j$, which at the moment is defined in $D_j$ only, to a function on all of $\Bbb^d$ by setting
\[
  w_j(y) := v_j(y)  \qquad\text{for $y \in \Bbb^d \setminus D_j$.}
\]
Since $w_j$ agrees with $v_j$ on the boundary of $D_j$, we deduce $w_j \in \Wrm^{1,p}(\Bbb^d;\R^d)$ and also
\[
  \norm{\nabla w_j - \nabla v_j}^p_p \leq \frac{C_p}{n}.
\]

We will show next the crucial fact that the family of functions $\{ \kappa( \det \nabla w_j ) \}_j$ is equiintegrable. For this, we estimate for any $\ell \in \N$ (large enough so that $\kappa$ is decreasing on $(0,1/\ell)$):
\begin{align*}
  &\int_{\{\det\, \nabla w_j \leq 1/\ell \}} \kappa( \det\, \nabla w_j(y) ) \dd y \\
    &\qquad \leq \kappa(1/k) \cdot \abs{E_j} + \int_{\{1/(k+1) \leq \det\, \nabla v_j \leq 1/\ell \}} \, \kappa( \det\, \nabla v_j(y) ) \dd y \\
    &\qquad \leq C \biggl( \frac{1}{n} + \int_{\{\det\, A \leq 1/\ell \}} \kappa( \det\, A ) \dd \nu_{x_0}(A) \biggr).
\end{align*}
Here, for the first integral we used that $\det \nabla w_j \geq 1/k$ on $D_j \subset E_j$ and~\eqref{eq:YM_conv1}; the second integral was estimated using~\eqref{eq:YM_conv2}. Now recall that $j = j(n)$ was chosen depending on $n$ (and also on $k$, but this is again chosen depending on $n$). Thus, we may take the limit superior as $n \to \infty$ or, equivalently, as $j\to\infty$, to get
\[
  \limsup_{j\to\infty} \int_{\{\det\, \nabla w_j \leq 1/\ell \}} \kappa( \det\, \nabla w_j(y) ) \dd y
    \leq C \int_{\{\det\, A \leq 1/\ell \}} \kappa( \det\, A ) \dd \nu_{x_0}(A)
\]
and this vanishes as $\ell \toup \infty$.

Hence, we conclude that $\{ \kappa( \det \nabla w_{j(n)} ) \}_n$, or, without labeling the subsequence of $j$'s, $\{ \kappa( \det \nabla w_{j} ) \}_j$, is an equiintegrable family, i.e.\ after renaming the sequence we arrive at~\eqref{eq:kappa_equiint}. More precisely, given $K > 0$ we choose $\ell \in \N$ such that $\kappa(1/\ell) < K \leq \kappa(1/(\ell+1))$, whereby $\{ \kappa(\det\, A) > K \} \subset \{ \det\, A < 1/\ell \}$ and since $\ell \toup \infty$ as $K \toup \infty$ the above assertion implies the sought equiintegrability.
\end{proof}

\subsection{Connection to the Dacorogna--Moser theory and extensions}
\label{ssc:DM}

We investigate a similar question as in~\cite{DacMos90PDEJ}; however, in subcritical Sobolev spaces, the geometric interpretation no longer holds (which is manifested in the absence of compatibility conditions on the boundary).

\begin{theorem}\label{damo}
Let $\Omega \subset \R^d$ be a bounded Lipschitz domain, $1<p<d$, $J:\Omega\to\R$ be measurable with
\begin{equation*}
\int_{\Omega}|J(x)|^{p/d} \dd x<\infty,
\end{equation*}
and let $g\in \Wrm^{1-1/p,p}(\partial\Omega;\R^d)$. Then, there exists $v\in \Wrm^{1,p}(\Omega;\R^d)$ such that
\[
\left\{
\begin{aligned}
\det\nabla v(x)&=J(x)   &&\quad\text{for a.e.\ $x \in \Omega$,}\\
v|_{\partial\Omega}&=g  &&\quad\text{in the sense of trace.}
\end{aligned}
\right.
\]
\end{theorem}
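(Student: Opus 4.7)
The statement is essentially an immediate consequence of the convex integration principle already established, and the plan is to invoke Proposition~\ref{prop:convexint} directly after a trace-extension step. First, using the standard right inverse of the trace operator for Lipschitz domains, I would extend $g$ to a map $u \in \Wrm^{1,p}(\Omega;\R^d)$ with $u|_{\partial\Omega} = g$. This ingredient is purely classical and has nothing to do with the determinant constraint.

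Next, define the constraint function $R(x,A) := |\det\, A - J(x)|$, which clearly has zero-sublevel set $S_{R(x,\frarg)} = \{A \in \R^{d\times d} : \det\, A = J(x)\}$. I would verify that $R \in \Rcal^{p,d}(\Omega;\R^{d\times d})$ by the pointwise bound $|R(x,A)| \leq |J(x)| + C|A|^d$, where $|J|^{p/d} \in \Lrm^1(\Omega)$ by hypothesis; so $\kappa(x) := |J(x)|$ works. Then Proposition~\ref{prop:geometry} (applied with $r := J$) says that $\R^{d\times d}$ is tightly contained in the $p$-quasiconvex hull of $(S_{R(x,\frarg)})_{x\in\Omega}$, uniformly in $x$.

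With these ingredients in place, I would apply Proposition~\ref{prop:convexint} to the extension $u$ of $g$. This yields a map $v \in \Wrm^{1,p}(\Omega;\R^d)$ with $\nabla v(x) \in S_{R(x,\frarg)}$ for almost every $x$, i.e.\ $\det\nabla v(x) = J(x)$ a.e.\ in $\Omega$, and with $v - u \in \Wrm^{1,p}_0(\Omega;\R^d)$. The latter forces the trace of $v$ on $\partial\Omega$ to agree with the trace of $u$, which was chosen to be $g$. This produces exactly the desired solution, and completes the proof.

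There is no real obstacle here: the entire content is packaged in Proposition~\ref{prop:convexint} and the geometric statement of Proposition~\ref{prop:geometry}, both of which have already been proved. The only verifications required are the membership $R \in \Rcal^{p,d}$ (routine, from the growth of the determinant and the $\Lrm^{p/d}$ hypothesis on $J$) and the interpretation of the constraint $v - u \in \Wrm^{1,p}_0$ as a trace condition (standard for Lipschitz domains). The striking feature of the statement, namely the absence of any compatibility condition of the Dacorogna--Moser type relating $J$ and $g$, is a reflection of the flexibility already built into Proposition~\ref{prop:geometry} in the subcritical range $p<d$, and not an additional difficulty to be resolved in this proof.
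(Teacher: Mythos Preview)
Your proposal is correct and matches the paper's own proof essentially line for line: the paper also extends $g$ to $u\in\Wrm^{1,p}(\Omega;\R^d)$ via surjectivity of the trace operator, takes $R(x,A)=|\det\,A-J(x)|\in\Rcal^{p,d}(\Omega;\R^{d\times d})$, and then invokes Proposition~\ref{prop:convexint} together with the geometric input (the paper cites Corollary~\ref{cor:geometry}, you cite Proposition~\ref{prop:geometry} directly, which is equivalent here since $J_1=J_2=J$).
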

\begin{proof}
Since the trace operator is surjective from $\Wrm^{1,p}(\Omega)$ to $\Wrm^{1-1/p,p}(\partial\Omega)$, there exists $u\in \Wrm^{1,p}(\Omega)$ such that $u|_{\partial\Omega}=g$ in the trace sense. The statement then follows immediately by Corollary~\ref{cor:geometry} combined with Proposition~\ref{prop:convexint}, taking $R(x,A)=|\det\, A-J(x)|\in\Rcal^{p,d}(\Omega;\R^{d \times d})$. 
\end{proof}

\begin{corollary}
Let $\Omega \subset \R^d$ be a bounded Lipschitz domain, $1<p<d$ and $g\in \Wrm^{1-1/p,p}(\partial\Omega;\R^d)$. Then, there exists a map $v\in \Wrm^{1,p}(\Omega;\R^d)$ such that
\[
\left\{
\begin{aligned}
\det\nabla v(x)&=1      &&\quad\text{for a.e.\ $x \in \Omega$,}\\
v|_{\partial\Omega}&=g  &&\quad\text{in the sense of trace.}
\end{aligned}
\right.
\]
\end{corollary}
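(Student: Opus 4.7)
The plan is to obtain this statement as an immediate specialization of Theorem~\ref{damo} with the choice $J(x) \equiv 1$. The first step is to check that the hypotheses of Theorem~\ref{damo} are met for this $J$: the constant function $J \equiv 1$ is trivially measurable, and since $\Omega$ is bounded we have
\[
\int_{\Omega} |J(x)|^{p/d} \dd x = |\Omega| < \infty.
\]
The remaining assumptions, namely that $\Omega$ is a bounded Lipschitz domain, that $1 < p < d$, and that $g \in \Wrm^{1-1/p,p}(\partial\Omega;\R^d)$, are already among the hypotheses of the corollary.

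Having verified the hypotheses, the second step is simply to invoke Theorem~\ref{damo}, which produces a map $v \in \Wrm^{1,p}(\Omega;\R^d)$ satisfying $\det \nabla v(x) = J(x) = 1$ for almost every $x \in \Omega$, together with the trace condition $v|_{\partial\Omega} = g$. This is exactly the incompressibility statement claimed. No further argument is needed, and there is no genuine obstacle: the only substantive work has been absorbed into Theorem~\ref{damo} (which itself relies on Corollary~\ref{cor:geometry} and Proposition~\ref{prop:convexint}). The point of recording the statement separately is to emphasize the physically natural incompressibility case $\det \nabla v = 1$, where the flexibility of the pointwise Jacobian in the subcritical regime $p<d$ produces solutions irrespective of any compatibility between the prescribed unit volume and the boundary datum $g$ (in sharp contrast to the classical Dacorogna--Moser setting, where $\int_\Omega J \dd x$ must match the volume enclosed by $g(\partial\Omega)$).
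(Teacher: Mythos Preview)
Your proposal is correct and matches the paper's approach: the corollary is stated immediately after Theorem~\ref{damo} without a separate proof, precisely because it is the obvious specialization $J\equiv 1$ that you have written out.
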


Of course, also the constraint $\det \nabla u(x) = J(x)$ for a given $J \colon \Omega' \to \R$, satisfying the usual assumptions, can be treated.

\subsection{Relaxation}
\label{ssc:relax}

Consider the following two functionals for a Carath\'{e}odory function $f \colon \Omega \times \R^{d\times d} \to \R$ and a function $\bar{u}\in\Wrm^{1,p}(\Omega)$:
\begin{itemize}
\item $\Fcal[u]:= \displaystyle \int_{\Omega}f(x,\nabla u(x)) \dd x$,\qquad defined over the set
\[
\mathcal{A}:=\setb{u\in \Wrm^{1,p}(\Omega,\mathbb{R}^d)}{u\vert_{\partial\Omega}=\bar{u},\,\nabla u(x)\in S_R\text{ a.e.}},
\]
where $S_R = S_{\det\geq r} :=\setb{A\in\R^{d\times d}}{\det\, A \geq r}$ or $S_R = S_{\det=r} :=\setb{A\in\R^{d\times d}}{\det\, A = r}$.
\item $\Fcal^{YM}(\nu):= \displaystyle\int_{\Omega} \int f(x,A) \dd \nu_x(A) \dd x$,\qquad defined over the set
\[
\mathcal{A}^{YM}:=\setb{\nu\,\text{$p$-GYM}}{ \text{$\supp{\nu_x}\subset S_R$ a.e.\ , $[\nu]=\nabla u$, $u\in\Wrm^{1,p}(\Omega;\R^d)$, $u\vert_{\partial\Omega}=\bar{u}$}},
\]
where we used \enquote{$p$-GYM} as an abbreviation for \enquote{gradient $p$-Young measure}.
\end{itemize}
The following relaxation theorem holds:

\begin{corollary}\label{thm:relaxation}
Suppose that $\Omega\subset\mathbb{R}^d$ is a bounded Lipschitz domain, $1<p<d$, $\bar{u}\in\Wrm^{1,p}(\Omega)$, and $f: \Omega \times \mathbb{R}^{d\times d}\rightarrow\mathbb{R}$ is a Carath\'{e}odory function satisfying
\[
c(\abs{A}^p-1)\leq f(x,A)\leq C(1+\abs{A}^p)
\]
for all $(x,A)\in\Omega\times\mathbb{R}^{d\times d}$ and constants $0<c\leq C$. Then,
\[
\inf_{\mathcal{A}}\, I=\min_{\mathcal{A}^{YM}}\, \Fcal^{YM}.
\]
In particular, whenever $(u_j)$ is an infimizing sequence of $I$ in $\mathcal{A}$, a subsequence of $(\nabla u_j)$ generates a Young measure $\nu\in\mathcal{A}^{YM}$ minimizing $\Fcal^{YM}$  over $\mathcal{A}^{YM}$. Conversely, whenever $\nu$ minimizes $\Fcal^{YM}$ in $\mathcal{A}^{YM}$, there exists an infimizing sequence $(u_j)$ of $I$ in $\mathcal{A}$ such that $(\nabla u_j)$ generates $\nu$.
\end{corollary}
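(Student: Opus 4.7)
The plan is to establish the equality of the infima by two inequalities, and then to derive existence of a minimizer and the sequence-to-measure correspondences as easy consequences.

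For the inequality $\inf_{\mathcal{A}}\mathcal{F}\geq \inf_{\mathcal{A}^{YM}}\mathcal{F}^{YM}$, the embedding $u\mapsto (\delta_{\nabla u(x)})_{x\in\Omega}$ sends $\mathcal{A}$ into $\mathcal{A}^{YM}$ (the barycenter is $\nabla u$, the support lies in $S_R$ by assumption, and the boundary trace is $\bar u$) and preserves the functional values since $\int f(x,A)\dd \delta_{\nabla u(x)}(A)=f(x,\nabla u(x))$.

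For the reverse inequality, fix $\nu\in\mathcal{A}^{YM}$. By the Kinderlehrer--Pedregal characterization recalled in Section~\ref{sc:convex_int}, $\nu$ satisfies (I)--(III) of Theorem~\ref{thm:main2}, and by the support condition defining $\mathcal{A}^{YM}$ it also satisfies (IV), applied with $(J_1,J_2)=(r,+\infty)$ or $J_1=J_2=r$ depending on which $S_R$ is used (the integrability of $J_i$ is trivial, being constant). Theorem~\ref{thm:main2} thus yields a $p$-equiintegrable sequence of gradients $(\nabla u_j)\subset\Lrm^p(\Omega;\R^{d\times d})$ that generates $\nu$, with $\nabla u_j \in S_R$ a.e.\ and $u_j - u \in \Wrm^{1,p}_0(\Omega;\R^d)$ where $u$ is the deformation underlying $\nu$. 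Since $u|_{\partial\Omega}=\bar u$ we obtain $u_j\in\mathcal{A}$. The upper bound $|f(x,A)|\leq C(1+|A|^p)$ combined with the $p$-equiintegrability of $(\nabla u_j)$ makes $(f(\cdot,\nabla u_j))$ equiintegrable, and by Young-measure representation
\[
\mathcal{F}(u_j)\;\longrightarrow\;\int_\Omega\int f(x,A)\dd\nu_x(A)\dd x\;=\;\mathcal{F}^{YM}(\nu).
\]
Taking the infimum over $\nu$ proves the remaining inequality and simultaneously establishes the last assertion of the corollary: every $\nu\in\mathcal{A}^{YM}$ is generated by an infimizing sequence in $\mathcal{A}$.

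For attainment, I would pick a minimizing sequence $(\nu^{(k)})\subset\mathcal{A}^{YM}$ and use the previous construction to produce $u^{(k)}\in\mathcal{A}$ with $\mathcal{F}(u^{(k)})\leq \mathcal{F}^{YM}(\nu^{(k)})+2^{-k}$. The coercivity $c(|A|^p-1)\leq f$ gives a uniform $\Wrm^{1,p}$-bound, so along a subsequence $u^{(k)}\toweak u^*$ in $\Wrm^{1,p}$ (with $u^*-\bar u\in\Wrm^{1,p}_0$ by trace continuity) and $\nabla u^{(k)}\toY \nu^*$ for some $p$-Young measure $\nu^*$. Properties (I)--(III) for $\nu^*$ are verified by truncation of $|A|^p$ together with Fatou for the lower bound and passage to the limit in the Jensen inequality along bounded quasiconvex test integrands; testing against continuous non-negative functions vanishing on the closed set $S_R$ forces $\supp\nu^*_x\subset S_R$ a.e., giving (IV). Hence $\nu^*\in\mathcal{A}^{YM}$, and the usual Fatou-type Young-measure lower semicontinuity (applied to $f+c\geq 0$) gives
\[
\mathcal{F}^{YM}(\nu^*)\;\leq\;\liminf_k \mathcal{F}(u^{(k)})\;=\;\inf_{\mathcal{A}^{YM}}\mathcal{F}^{YM},
\]
so $\nu^*$ is a minimizer. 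The same compactness/lower semicontinuity argument applied to an infimizing sequence $(u_j)\subset\mathcal{A}$ yields the remaining sequence-to-measure correspondence.

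The main technical obstacle is the attainment step: after the diagonal extraction, $p$-equiintegrability of $(\nabla u^{(k)})$ may be lost, so one has to verify by hand that $\nu^*$ is still a genuine $p$-gradient Young measure satisfying the pointwise Jacobian constraint. This is where the closedness of $S_R$ and the standard truncation/Fatou toolbox for Young measures are indispensable; everything else is a direct application of Theorem~\ref{thm:main2} combined with the $p$-growth of $f$.
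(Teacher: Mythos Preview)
Your proof is correct and is precisely the standard argument the paper alludes to; the paper's own proof is the single sentence ``Given the characterization of gradient $p$-Young measures with support in $S_R$ above, the proof is standard.'' Your concern in the last paragraph is overblown: once $(\nabla u^{(k)})$ is bounded in $\Lrm^p$ and generates $\nu^*$, the implication (i)$\Rightarrow$(ii) of Theorem~\ref{thm:main2} (equivalently, the necessity direction of Kinderlehrer--Pedregal together with the closedness of $S_R$) immediately gives $\nu^*\in\mathcal{A}^{YM}$, so no hand-verification of (I)--(III) or $p$-equiintegrability is needed.
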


\begin{proof}
Given the characterization of gradient $p$-Young measures with support in $S_R$ above, the proof is standard.
\end{proof}

Note that,  in our regime of $p<d$, the determinant is not in general weakly continuous along infimizing sequences and one cannot take
\[
\mathcal{A}^{YM}=\setb{\nu\,\text{$p$-GYM}}{ \text{$\supp{\nu_x}\subset S_R$ a.e.\ , $[\nu]=\nabla u$, where $u\in\Acal$}}
\]
as the set of admissible measures in the above relaxation theorem.

\subsection{Approximation}
\label{ssc:approx}

Next, we obtain the following interesting approximation result:

\begin{corollary}\label{cor:approximation}
Let $\Omega\subset\R^d$ be open and bounded with $|\partial\Omega|=0$. Suppose that $1 < p < d$ and $u\in\Wrm^{1,p}(\Omega,\R^d)$. For $S_R$, where either $R=r-\det\, A$ or $R = |r-\det\, A|$, there exists a sequence $(u_j)\subset\Wrm^{1,p}(\Omega,\R^d)$ bounded such that for all $j\in\N$, $u_j - u\in\Wrm^{1,p}_0$, $\nabla u_j(x)\in S_R$ for a.e.~$x\in\Omega$ and as $j\to\infty$
\[
u_j \rightharpoonup u\mbox{ in $\Wrm^{1,p}(\Omega,\R^d)$.}
\]
In particular, $\norm{u_j - u}_{p}\to 0$ as $j\to\infty$.
\end{corollary}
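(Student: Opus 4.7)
The plan is to deduce this directly from the characterization results already established, with $u$ itself playing the role of the deformation underlying a suitably chosen Young measure. The only real content of the corollary is that we may force the constraint $\nabla u_j \in S_R$ while preserving both the barycenter $\nabla u$ and the boundary values, and weak convergence then comes essentially for free.

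Concretely, first I would apply Proposition~\ref{prop:geometry} to the constant function $r(x) \equiv r$ (which lies in $\Lrm^{p/d}(\Omega)$ since $\Omega$ is bounded) and the gradient $M(x) := \nabla u(x) \in \Lrm^p(\Omega;\R^{d \times d})$. This produces a gradient $p$-Young measure $\nu = (\nu_x)_{x \in \Omega}$ with $[\nu_x] = \nabla u(x)$ and $\supp \nu_x \subset \setn{A \in \R^{d \times d}}{\det A = r}$ for a.e.\ $x$, together with the tightness bound
\[
  \int |A - \nabla u(x)|^p \dd \nu_x(A) \leq C \, |\det \nabla u(x) - r|^{p/d} \quad \text{a.e.}
\]
In either case $R = r - \det A$ or $R = |r - \det A|$, the set $\setn{\det = r}$ is contained in $S_R$, so $\supp \nu_x \subset S_R$ a.e.

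Next, I would feed $\nu$ into Theorem~\ref{thm:main_abstract} (equivalently Theorem~\ref{thm:main2} with $J_1 = J_2 = r$, or $J_1 = r$, $J_2 = +\infty$). Properties (I)--(IV) hold by construction: (I) from the tightness bound together with $\nabla u \in \Lrm^p$; (II) with the underlying deformation $u$ itself; (III) because $\nu$ is a gradient $p$-Young measure; and (IV) since $\supp \nu_x \subset S_R$. The theorem then provides a $p$-equiintegrable generating sequence $(\nabla u_j) \subset \Lrm^p(\Omega;\R^{d\times d})$ with $\nabla u_j(x) \in S_R$ a.e., $u_j - u \in \Wrm^{1,p}_0(\Omega;\R^d)$, and $\norm{\nabla u_j}_p$ uniformly bounded.

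To finish, I would upgrade generation to weak convergence: testing against $f(x,A) = \psi(x) \cdot A$ for arbitrary $\psi \in \Lrm^{p'}(\Omega;\R^{d \times d})$ (admissible by $p$-equiintegrability) gives $\int \psi : \nabla u_j \to \int \psi : [\nu] = \int \psi : \nabla u$, hence $\nabla u_j \rightharpoonup \nabla u$ in $\Lrm^p$. Combined with $u_j - u \in \Wrm^{1,p}_0$ and Poincar\'e's inequality this yields $u_j \rightharpoonup u$ weakly in $\Wrm^{1,p}(\Omega;\R^d)$, and the Rellich--Kondrachov compact embedding $\Wrm^{1,p}_0(\Omega) \embed \Lrm^p(\Omega)$ (which holds on any bounded open set) gives $\norm{u_j - u}_p \to 0$.

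There is no real obstacle here, since all the convex integration and Young measure machinery has been set up; the mild subtlety worth double-checking is only that Proposition~\ref{prop:geometry} applied to a constant $r$ and to the \emph{given} $M = \nabla u$ indeed produces an $x$-measurable family satisfying (iii) of Definition~\ref{def:p-full}, but this is precisely what the uniform-in-$x$ clause of Proposition~\ref{prop:geometry} provides.
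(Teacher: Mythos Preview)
Your proof is correct and follows essentially the same route as the paper: build a gradient $p$-Young measure with barycenter $\nabla u$ and support in $S_R$ via the tightness results (Proposition~\ref{prop:geometry}), then invoke the main characterization theorem to extract a generating sequence with the required boundary and constraint properties. The only cosmetic difference is that the paper writes the Young measure with an explicit case split ($\nu_x = \delta_{\nabla u(x)}$ when $\nabla u(x) \in S_R$, else the measure from tightness), whereas you apply Proposition~\ref{prop:geometry} uniformly and let the tightness bound (iii) collapse $\nu_x$ to a Dirac where appropriate; and you spell out the weak convergence step, which the paper leaves implicit.
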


\begin{proof}
Let $u\in \Wrm^{1,p}(\Omega,\R^d)$ and define a gradient $p$-Young measure $(\nu_x)$ with $[\nu]=\nabla u$ by
\[
\nu_x = \left\{\begin{array}{ll} \delta_{\nabla u(x)}, &\nabla u(x)\in S_R \\ \mu_x, & \nabla u(x)\notin S_R\end{array}\right. ,
\]
where $\mu_x$ is the homogeneous gradient $p$-Young measure provided by the fact that $\R^{d\times d}$ is tightly contained in the $p$-quasiconvex hull of $S_R$ for either $S_{\det \geq r}$ (see \proofstep{Step~1} in the proof of Corollary~\ref{cor:geometry}) or $S_{\det = r}$ (see Proposition~\ref{prop:geometry}). By Theorem~\ref{thm:main2}, there exists $(u_j)\subset \Wrm^{1,p}(\Omega,\R^d)$ generating $(\nu_x)$ such that $\nabla u_j(x)\in S_R$, $u_j - u\in\Wrm^{1,p}_0(\Omega,\R^d)$ and $u_j\rightharpoonup u$ in $\Wrm^{1,p}(\Omega,\R^d)$.
\end{proof}

For simplicity, we only stated this result for the constraints $\det \geq r$ and $\det = r$ which are relevant in elasticity; nevertheless, we note that the same result holds for the more general constraint
\[
J_1(x) \leq \det\nabla u_j(x) \leq J_2(x)\qquad\text{for all $j$ and a.e.~$x$},
\]
with $J_1$, $J_2$ are as in Theorem~\ref{thm:main2}. We note that this produces arbitrary counterexamples to the weak continuity of the determinant in $\Wrm^{1,p}(\Omega,\R^d)$ for $p<d$.

\section{Lack of lower semicontinuity for a class of functionals}\label{sec:lsc}
A \term{singular growth modulus} (cf. Theorem~\ref{thm:kappa_equiint}) is a convex function $\kappa \colon (0,\infty) \to [0,\infty)$ with $\kappa(s) \to +\infty$ as $s \to 0$. We extend $\kappa$ by setting $\kappa(s) := +\infty$ for $s\leq 0$. For $p<d$, let us assume the growth condition
\begin{equation} \label{eq:kappa_growth}
  \limsup_{s \to +\infty}\, \frac{\kappa(s)}{s^{p/d}} < \infty.
\end{equation}
In what follows, $f \colon \Omega \times \R^{d \times d} \to [0,\infty]$ will be a Carath\'{e}odory integrand satisfying the \term{elastic coercivity/growth estimates}
\begin{equation} \label{eq:f_elasticity_est}
  \frac{1}{M} \bigl( \abs{A}^p + \kappa( \det\, A ) \bigr) \leq f(x,A) \leq M \bigl( 1 + \abs{A}^p + \kappa( \det\, A ) \bigr)
\end{equation}
for a constant $M > 0$.

In this section, we want to show that under these assumptions, the functional
\begin{equation} \label{eq:F_def}
  \Fcal[u] := \int_\Omega f(x,\nabla u(x)) \dd x,	\qquad
  \text{where $u \in \Wrm^{1,p}(\Omega;\R^d)$ with $\det\, \nabla u > 0$ a.e.,}
\end{equation}
is \emph{not} $\Wrm^{1,p}$-weakly lower semicontinuous along sequences $u_j \toweak u$ in $\Wrm^{1,p}(\Omega;\R^d)$ satisfying the additional constraint
\[
  \det\, \nabla u > 0  \quad\text{a.e.}
\]
We show this in two steps: First we show that this form of lower semicontinuity implies a certain quasiconvexity condition on $f$; secondly, we prove that no such integrands exist under the growth conditions~\eqref{eq:f_elasticity_est}. 

More precisely, let $h \colon \R^{d \times d} \to (-\infty,+\infty]$ be a Borel function that is locally bounded on (i.e.\ bounded on any compact subset of) the set $\setn{ A \in \R^{d \times d} }{ \det\, A > 0 }$. We call $h$ \term{$\Wrm^{1,p}$-orientation-preserving quasiconvex} if
\[
  h(A_0) \leq \dashint_{\Bbb^d} h(\nabla v(x)) \dd x
\]
for all $A_0 \in \R^{d \times d}$ with $\det\, A_0 > 0$ and all $v \in \Wrm^{1,p}(\Bbb^d;\R^d)$ with $v(x) = A_0x$ on $\partial \Bbb^d$ (in the sense of trace) and $\det\, \nabla v > 0$ almost everywhere (recall that $\Bbb^d$ denotes the unit ball in $\R^d$).

We note that under the additional $p$-growth condition $\abs{h(A)} \leq M(1+\abs{A}^p)$ the notion of $\Wrm^{1,p}$-orientation-preserving quasiconvexity is weaker than the usual quasiconvexity~\cite{Morr52QSMI,Daco08DMCV}, since it is clearly weaker than $\Wrm^{1,p}$-quasiconvexity.

\begin{remark}
We remark that, starting from the prototypical example of the determinant, there is a sizeable literature on the weak lower semicontinuity of polyconvex and quasiconvex functionals below the critical exponent $p=d$. As this lies outside the scope of the present work the reader is referred to \cite{FM97,Marcellini86,Maly93,AcDalM94,DalMSb95} and references therein. 
\end{remark}

Returning to our result, we then have:
\begin{proposition} \label{prop:converse}
For $1 < p < d$ and a bounded open Lipschitz domain $\Omega \subset \R^d$, let $\kappa$ be a singular growth modulus with~\eqref{eq:kappa_growth} and assume that $f \colon \Omega \times \R^{d \times d} \to [0,\infty)$ is a Carath\'{e}odory integrand satisfying the elastic coercivity/growth estimates~\eqref{eq:f_elasticity_est}. Also, let the functional $\Fcal$ be defined as in~\eqref{eq:F_def}. If $\Fcal$ is $\Wrm^{1,p}$-weakly lower semicontinuous along sequences $u_j \toweak u$ in $\Wrm^{1,p}(\Omega;\R^3)$ satisfying the additional assumption $\det\, \nabla u > 0$ a.e., then
\[
  \text{$f(x,\frarg)$ is orientation-preserving quasiconvex for almost every $x \in \Omega$.}
\]
\end{proposition}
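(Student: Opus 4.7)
The strategy is a Morrey-type localization adapted to the orientation-preserving constraint. Fix any $A_0 \in \R^{d \times d}$ with $\det\, A_0 > 0$ and any admissible test map $v \in \Wrm^{1,p}(\Bbb^d;\R^d)$ satisfying $v(y) = A_0 y$ on $\partial \Bbb^d$ (in the sense of trace) and $\det \nabla v > 0$ a.e. We may assume $\int_{\Bbb^d} \kappa(\det \nabla v)\dd y < \infty$, since otherwise $\dashint_{\Bbb^d} f(x_0, \nabla v(y))\dd y = +\infty$ by~\eqref{eq:f_elasticity_est} and the OP quasiconvexity inequality is trivially true. Select $x_0 \in \Omega$ that is simultaneously a Lebesgue point of $x \mapsto f(x, A_0)$ and of $x \mapsto \dashint_{\Bbb^d} f(x, \nabla v(y))\dd y$; the set of such $x_0$ has full measure in $\Omega$.

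Set $u(x) := A_0 x$ on $\Omega$ (globally affine, hence $\det \nabla u \equiv \det A_0 > 0$), and for small $r > 0$ with $B_r(x_0) \subset \Omega$ and $k \in \N$, apply the Vitali covering lemma to find pairwise disjoint balls $B_{r/k}(x_i^k) \subset B_r(x_0)$, $i = 1, \ldots, N_k$, exhausting $B_r(x_0)$ up to a null set. Put $w(y) := v(y) - A_0 y \in \Wrm^{1,p}_0(\Bbb^d;\R^d)$, extended by zero to $\R^d$, and define the recovery sequence
\[
u_k(x) := u(x) + \sum_{i=1}^{N_k} \frac{r}{k}\, w\biggl(\frac{k(x - x_i^k)}{r}\biggr), \qquad x \in \Omega.
\]
Since $w$ has vanishing trace, $u_k \in \Wrm^{1,p}(\Omega;\R^d)$ with $u_k - u \in \Wrm^{1,p}_0(B_r(x_0);\R^d)$; on each $B_{r/k}(x_i^k)$ a direct computation gives $\nabla u_k(x) = \nabla v(k(x-x_i^k)/r)$, so $\det \nabla u_k > 0$ a.e.\ in $\Omega$. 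Moreover $\norm{u_k - u}_{\Lrm^p(\Omega)} = \BigO(r/k)$ and $\norm{\nabla u_k}_{\Lrm^p(\Omega)}$ is uniformly bounded, whence $u_k \toweak u$ in $\Wrm^{1,p}(\Omega;\R^d)$.

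A scaling argument (together with the Vitali structure of the covering) identifies the Young measure generated by $(\nabla u_k)$ on $B_r(x_0)$ as the homogeneous measure $\nu$ representing integrals by $\dashint_{\Bbb^d} \phi(\nabla v(y))\dd y$. Equiintegrability of $\{|\nabla u_k|^p + \kappa(\det \nabla u_k)\}_k$ on $B_r(x_0)$ follows from absolute continuity of $A \mapsto \int_A (|\nabla v|^p + \kappa(\det \nabla v))\dd y$ via a splitting of indices according to the size of $(k/r)(E - x_i^k) \cap \Bbb^d$; the growth~\eqref{eq:f_elasticity_est} transfers this to the family $\{f(\frarg, \nabla u_k)\}_k$. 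Young-measure representation then yields
\[
\lim_{k \to \infty} \int_{B_r(x_0)} f(x, \nabla u_k(x))\dd x = \int_{B_r(x_0)} \dashint_{\Bbb^d} f(x, \nabla v(y))\dd y\dd x.
\]
Applying the hypothesized lower semicontinuity (valid since $\det \nabla u > 0$ and $\det \nabla u_k > 0$ a.e.\ and $u_k \toweak u$) and cancelling the common contribution on $\Omega \setminus B_r(x_0)$, we obtain
\[
\int_{B_r(x_0)} f(x, A_0)\dd x \leq \int_{B_r(x_0)} \dashint_{\Bbb^d} f(x, \nabla v(y))\dd y\dd x.
\]
Dividing by $|B_r(x_0)|$ and letting $r \downarrow 0$, the Lebesgue-point property of $x_0$ produces $f(x_0, A_0) \leq \dashint_{\Bbb^d} f(x_0, \nabla v(y))\dd y$. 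A countable-density argument in the admissible pairs $(A_0, v)$, combined with a continuity extension, promotes this to orientation-preserving quasiconvexity of $f(x, \frarg)$ for almost every $x \in \Omega$.

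The main technical obstacle is the equiintegrability of $\{|\nabla u_k|^p + \kappa(\det \nabla u_k)\}_k$, which is what enables Young-measure representation to be applied to the $p$-growth integrand $f$. The closing countable-density step is also delicate because the admissibility condition $\det \nabla v > 0$ is not closed under weak $\Wrm^{1,p}$-convergence for $p < d$; however, the finite-integrability assumption $\int \kappa(\det \nabla v)\dd y < \infty$ together with the growth control~\eqref{eq:kappa_growth} provides pointwise a.e.\ convergent subsequences along which one can pass to the limit.
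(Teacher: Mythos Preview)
Your approach is essentially correct and in fact goes further than the paper, which explicitly restricts to the $x$-independent case $f(x,A)=h(A)$ and only remarks that ``otherwise, one needs to use an additional localization argument''. Your Lebesgue-point and countable-density steps are precisely that missing localization.

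However, you significantly overcomplicate the core argument. The paper's key observation, which you do not exploit, is that the Vitali rescaling yields an \emph{exact} identity: if $h$ does not depend on $x$, then for every $k$,
\[
  \int_{B_r(x_0)} h(\nabla u_k(x)) \dd x
  \;=\; \sum_i \Bigl(\frac{r}{k}\Bigr)^d \int_{\Bbb^d} h(\nabla v(y)) \dd y
  \;=\; |B_r(x_0)| \,\dashint_{\Bbb^d} h(\nabla v(y)) \dd y.
\]
Hence there is no limit to compute on the right-hand side, and no Young-measure representation or equiintegrability is needed at all: lower semicontinuity gives directly $h(A_0) \leq \dashint_{\Bbb^d} h(\nabla v)\dd y$. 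What you call ``the main technical obstacle'' simply does not arise. Even in your $x$-dependent version, it is cleaner to freeze $x$ at $x_0$, use the exact scaling identity for $f(x_0,\frarg)$, and control the error $\int_{B_r(x_0)} |f(x,\nabla u_k) - f(x_0,\nabla u_k)|\dd x$ via the Lebesgue-point property and the growth bound~\eqref{eq:f_elasticity_est}, rather than going through the full Young-measure machinery.

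Your closing countable-density step is reasonable in outline but sketchy as written: the claim that one can pass to the limit along approximating $v$'s using~\eqref{eq:kappa_growth} and pointwise a.e.\ convergence needs an explicit argument (for instance, approximating an admissible $v$ by ones taking values in a countable dense set of matrices with positive determinant, and invoking dominated convergence via the upper bound in~\eqref{eq:f_elasticity_est}). The paper does not address this either.
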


\begin{proof}
We assume that $f(x,A) = h(A)$ does not depend on $x$ (otherwise, one needs to use an additional localization argument).

Let $A_0 \in \R^{d \times d}$ with $\det\, A_0 > 0$ and let $v \in \Wrm^{1,p}(\Bbb^d;\R^d)$ with $v(x) = A_0x$ on $\partial \Bbb^d$ (in the sense of trace) and $\det\, \nabla v > 0$ a.e. By virtue of the Vitali Covering Theorem, find a covering of $\Lcal^d$-almost all of $\Bbb^d$ by balls $B(x_k,r_k) \subset \Bbb^d$ such that $r_k \leq 1/j$, $k \in \N$, and define
\[
  w_j(x) := \sum_k r_k \, \ONE_{B(x_k,r_k)}(x) \, v \Bigl( \frac{x-x_k}{r_k} \Bigr) + A_0 x_k,
\]
hence $w_j(x) = A_0 x$ for $x \in \partial \Bbb^d$ (in the sense of trace) and
\[
  \nabla w_j(x) = \sum_k \ONE_{B(x_k,r_k)}(x) \nabla v \Bigl( \frac{x-x_k}{r_k} \Bigr).
\]
Then,
\begin{align*}
  \int_{\Bbb^d} h(\nabla w_j(x)) \dd x &= \sum_k \int_{B(x_k,r_k)} h \Bigl( \nabla v \Bigl( \frac{x-x_k}{r_k} \Bigr) \Bigr) \dd x
    = \sum_k r_k^d \int_{\Bbb^d} h(\nabla v(y)) \dd y \\
  &= \int_{\Bbb^d} h(\nabla v(y)) \dd y
\end{align*}
Also, $w_j$ converges weakly to the linear function $x \mapsto A_0x$ in $\Wrm^{1,p}(\Bbb^d;\R^d)$. Thus, since $\det\, \nabla u(x) = \det\, A_0 > 0$, the assumed lower semicontinuity implies
\[
  h(A_0) \leq \liminf_{j\to\infty} \dashint_{\Bbb^d} h(\nabla w_j(x)) \dd x = \dashint_{\Bbb^d} h(\nabla v(y)) \dd y,
\]
and $h$ is $\Wrm^{1,p}$-orientation-preserving quasiconvex.
\end{proof}

Next, we prove:

\begin{proposition}\label{prop:nonexistence}
Suppose that $h$ satisfies the growth conditions~\eqref{eq:f_elasticity_est} for some $p\in (1,d)$. Then $h$ is not $\Wrm^{1,p}$-orientation-preserving quasiconvex.
\end{proposition}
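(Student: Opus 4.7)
The plan is to directly violate the defining inequality of $\Wrm^{1,p}$-orientation-preserving quasiconvexity by choosing a matrix $A_0$ with positive but tiny determinant, and then constructing via the convex integration machinery (Theorem~\ref{damo}, equivalently Corollary~\ref{cor:geometry} combined with Proposition~\ref{prop:convexint}) a competitor $v \in \Wrm^{1,p}(\Bbb^d;\R^d)$ equal to $A_0 x$ on $\partial \Bbb^d$ in the trace sense, whose pointwise Jacobian is identically $1$ almost everywhere. The coercivity half of~\eqref{eq:f_elasticity_est} will force $h(A_0)$ to blow up as $\det A_0 \to 0^+$, whereas the upper growth estimate combined with the $\Wrm^{1,p}$-control supplied by Proposition~\ref{prop:convexint} will keep $\dashint_{\Bbb^d} h(\nabla v)\dd x$ bounded independently of $\det A_0$.

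Concretely, I would fix $\eps \in (0,1)$ and set $A_0 := \mathrm{diag}(1,\ldots,1,\eps)$, so that $\det A_0 = \eps$ and $\abs{A_0}\leq \sqrt{d}$ uniformly in $\eps$. Starting from the affine map $u(x) := A_0 x$, I would apply Proposition~\ref{prop:convexint} on $\Omega = \Bbb^d$ with the constraint function $R(A) := \abs{\det A - 1} \in \Rcal^{p,d}$ (the tight-containment hypothesis being furnished by Corollary~\ref{cor:geometry} with $J_1 = J_2 \equiv 1$) to produce $v \in \Wrm^{1,p}(\Bbb^d;\R^d)$ with $v - u \in \Wrm^{1,p}_0(\Bbb^d;\R^d)$, $\det \nabla v = 1$ almost everywhere, and
\[
  \norm{\nabla v - A_0}_p^p \leq C \int_{\Bbb^d} (1-\eps)^{p/d} \dd x \leq C\abs{\Bbb^d}.
\]
Hence $v$ is admissible in the defining inequality of $\Wrm^{1,p}$-orientation-preserving quasiconvexity, and $\norm{\nabla v}_p$ is bounded by a constant $K_0 > 0$ independent of $\eps$.

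The upper bound in~\eqref{eq:f_elasticity_est} then yields
\[
  \dashint_{\Bbb^d} h(\nabla v) \dd x \leq M\Bigl(1 + \dashint_{\Bbb^d} \abs{\nabla v}^p \dd x + \kappa(1)\Bigr) \leq M\bigl(1 + K_0^p/\abs{\Bbb^d} + \kappa(1)\bigr) =: K,
\]
a constant \emph{independent} of $\eps$, while the lower bound gives
\[
  h(A_0) \geq \frac{1}{M}\bigl(\abs{A_0}^p + \kappa(\eps)\bigr) \geq \frac{\kappa(\eps)}{M} \longrightarrow +\infty \qquad (\eps \todown 0),
\]
since $\kappa$ is a singular growth modulus. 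Choosing $\eps$ small enough so that $h(A_0) > K$ produces the desired violation of $\Wrm^{1,p}$-orientation-preserving quasiconvexity.

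There is essentially no serious obstacle once the convex integration machinery is in place; the one point to verify carefully is that the right-hand side of Proposition~\ref{prop:convexint}(iii) remains bounded uniformly in $\eps$, which is automatic here because the target determinant value is the fixed constant $1$ and the source value $\eps$ stays in the bounded interval $(0,1)$. I note that the growth condition~\eqref{eq:kappa_growth} at infinity plays no role in the argument: the strategy of prescribing $\det \nabla v \equiv 1$ sidesteps any need to control $\kappa$ at infinity, the blowup of $\kappa$ at $0$ being the only feature of elastic integrands used here.
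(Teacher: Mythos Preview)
Your proof is correct and follows essentially the same route as the paper: pick a matrix with small positive determinant, invoke Proposition~\ref{prop:convexint} (with the tight-containment input from Proposition~\ref{prop:geometry}/Corollary~\ref{cor:geometry}) to produce a $\Wrm^{1,p}$-bounded competitor with Jacobian identically $1$, and compare the two sides of the quasiconvexity inequality using~\eqref{eq:f_elasticity_est}. The only cosmetic difference is your choice $A_0=\mathrm{diag}(1,\ldots,1,\eps)$ versus the paper's $A^\eps=\eps I$; your closing remark that~\eqref{eq:kappa_growth} is not actually used is a nice observation the paper leaves implicit.
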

\begin{proof}
For $\epsilon>0$, define $A^{\epsilon}=\epsilon I$, where $I$ denotes the $d\times d$ identity matrix. By Propositions~\eqref{prop:convexint} and~\ref{prop:geometry} with $r=1$, there exists $v^{\epsilon}\in \Wrm^{1,p}(\Bbb^d)$ such that $\det\,\nabla v^{\epsilon}=1$ almost everywhere and $v^{\epsilon}-A^{\epsilon}x\in\Wrm_0^{1,p}(\Bbb^d)$. Moreover, by Proposition~\eqref{prop:convexint},
\begin{equation*}
\norm{\nabla v^{\epsilon}-A^{\epsilon}}_p^p\leq C\int_{\Bbb^d}|\det\, A^{\epsilon}-1|^{p/d}\dd x,
\end{equation*}
whence it follows (observing $\det\, A^{\epsilon}=\epsilon^d$) that $\norm{\nabla v^{\epsilon}}_p<C$ for a constant independent of $\epsilon$ (at least when $\epsilon$ is small). By~\eqref{eq:f_elasticity_est}, on one hand
\begin{equation*}
\lim_{\epsilon\searrow0}h(A^{\epsilon})=+\infty,
\end{equation*}
but on the other hand
\begin{equation*}
\begin{aligned}
\dashint_{\Bbb^d} h(\nabla v^{\epsilon}(x)) \dd x&\leq M\dashint_{\Bbb^d}(1+|\nabla v^{\epsilon}(x)|+\kappa(1))\\
&\leq M(1+\kappa(1)+\norm{\nabla v^{\epsilon}}_p^p)\leq C.
\end{aligned}
\end{equation*}
Since $\epsilon>0$ was arbitrary, it follows that $h$ cannot be $\Wrm^{1,p}$-orientation-preserving quasiconvex.
\end{proof}

The combination of Propositions~\ref{prop:converse} and~\ref{prop:nonexistence} finally yields:
\begin{theorem}\label{thm:wlsc}
For $1 < p < d$ and a bounded open Lipschitz domain $\Omega \subset \R^d$, let $\kappa$ be a singular growth modulus with~\eqref{eq:kappa_growth} and assume that $f \colon \Omega \times \R^{d \times d} \to [0,\infty)$ is a Carath\'{e}odory integrand satisfying the elastic coercivity/growth estimates~\eqref{eq:f_elasticity_est}. Also, let the functional $\Fcal$ be defined as in~\eqref{eq:F_def}. Then, $\Fcal$ is not $\Wrm^{1,p}$-weakly lower semicontinuous along sequences $u_j \toweak u$ in $\Wrm^{1,p}(\Omega;\R^3)$ satisfying the additional assumption $\det\, \nabla u > 0$ a.e. 
\end{theorem}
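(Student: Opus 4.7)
The plan is to argue by contradiction, using Propositions~\ref{prop:converse} and~\ref{prop:nonexistence} as the two pillars of the argument; the theorem is essentially a formal combination of the two.

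Suppose, aiming at a contradiction, that $\Fcal$ \emph{is} $\Wrm^{1,p}$-weakly lower semicontinuous along sequences $u_j\toweak u$ in $\Wrm^{1,p}(\Omega;\R^d)$ whose limit satisfies $\det\nabla u>0$ a.e.\ (the generating sequence being unconstrained in sign of its Jacobian, as in the hypothesis of Proposition~\ref{prop:converse}). Under the standing hypotheses~\eqref{eq:kappa_growth} and~\eqref{eq:f_elasticity_est}, Proposition~\ref{prop:converse} then yields a null set $N\subset\Omega$ such that for every $x\in\Omega\setminus N$ the slice integrand $h_x(\frarg):=f(x,\frarg)$ is $\Wrm^{1,p}$-orientation-preserving quasiconvex.

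Since $f$ is Carath\'{e}odory and the estimates~\eqref{eq:f_elasticity_est} hold pointwise in $x$, for each $x\in\Omega\setminus N$ the function $h_x$ itself satisfies the elastic coercivity/growth conditions~\eqref{eq:f_elasticity_est} with the same constant $M$. Thus the hypotheses of Proposition~\ref{prop:nonexistence} are met for $h=h_x$, and that proposition asserts that $h_x$ is \emph{not} $\Wrm^{1,p}$-orientation-preserving quasiconvex. This contradicts the conclusion of Proposition~\ref{prop:converse} and establishes the theorem.

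There is essentially no obstacle: both nontrivial steps have already been carried out. The only point requiring (minor) care is to verify that the quasiconvexity conclusion of Proposition~\ref{prop:converse}, which is a statement about the $x$-sections of $f$, is compatible with the hypothesis of Proposition~\ref{prop:nonexistence}, which is formulated for a fixed integrand $h$ on $\R^{d\times d}$; this compatibility is immediate from the pointwise-in-$x$ validity of the bounds~\eqref{eq:f_elasticity_est} and the fact that the counterexample sequence $v^\eps$ built in the proof of Proposition~\ref{prop:nonexistence} does not depend on $x$.
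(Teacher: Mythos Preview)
Your proposal is correct and follows exactly the paper's approach: the paper's proof of this theorem is simply the one-line remark that the result follows by combining Propositions~\ref{prop:converse} and~\ref{prop:nonexistence}, and you have merely spelled out this combination explicitly. The only quibble is the parenthetical about the generating sequence being ``unconstrained in sign of its Jacobian'': in the proof of Proposition~\ref{prop:converse} the sequence $w_j$ actually does have $\det\nabla w_j>0$ a.e., so this aside is both unnecessary and slightly misleading, but it does not affect the validity of your argument.
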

\begin{remark}
\begin{itemize}
\item[a)] Since every $\Wrm^{1,p}$-quasiconvex function, cf.~\cite{BalMur84WQVP}, is clearly $\Wrm^{1,p}$-orientation-preserving quasiconvex, it follows from the theorem that there exist no $\Wrm^{1,p}$-quasiconvex functions with the growth conditions~\eqref{eq:f_elasticity_est}.
\item[b)] It is apparent from the proof of Proposition~\ref{prop:nonexistence} that the theorem still holds if the upper bound in~\eqref{eq:f_elasticity_est} is weakened to $f(x,A)\leq M(1+|A|^p)$ for all matrices such that $\det\, A=r$ for \emph{some} $r>0$.

\end{itemize}
\end{remark}


\begin{thebibliography}{{Mor}52}

\bibitem[ADM94]{AcDalM94}
E.~Acerbi and G.~Dal~Maso, \emph{New lower semicontinuity results for
  polyconvex integrals}, Calculus of Variations and Partial Differential
  Equations \textbf{2} (1994), no.~3, 329--371.

\bibitem[AFS08]{AsFaSz08CILP}
K.~Astala, D.~Faraco, and L.~{Sz\'{e}kelyhidi, Jr.}, \emph{Convex integration
  and the $l^p$ theory of elliptic equations}, Ann. Sc. Norm. Super. Pisa Cl.
  Sci. \textbf{7} (2008), 1--50.

\bibitem[AM08]{AnHMan09RTNE}
O.~{Anza Hafsa} and J.-P. Mandallena, \emph{Relaxation theorems in nonlinear
  elasticity}, Ann. Inst. H. Poincar\'e Anal. Non Lin\'eaire \textbf{25}
  (2008), 135--148.

\bibitem[Bal82]{Ball82DESC}
J.~M. Ball, \emph{Discontinuous equilibrium solutions and cavitation in
  nonlinear elasticity}, Philos. Trans. Roy. Soc. London Ser. A \textbf{306}
  (1982), 557--611.

\bibitem[Bal02]{Ball02SOPE}
\bysame, \emph{Some open problems in elasticity}, Geometry, mechanics, and
  dynamics, Springer, 2002, pp.~3--59.

\bibitem[Bal77]{Ball77CCET}
J.~M. Ball, \emph{Convexity conditions and existence theorems in nonlinear
  elasticity}, Arch. Ration. Mech. Anal. \textbf{63} (1976/77), 337--403.

\bibitem[BM84]{BalMur84WQVP}
J.~M. Ball and F.~Murat, \emph{{$W^{1,p}$}-quasiconvexity and variational
  problems for multiple integrals}, J. Funct. Anal. \textbf{58} (1984),
  225--253, Erratum: Vol. 66 (1986), 439.

\bibitem[CD14]{ContiDolz14}
S.~Conti and G.~Dolzmann, \emph{On the theory of relaxation in nonlinear
  elasticity with constraints on the determinant}, arXiv preprint
  arXiv:1403.5779 (2014).

\bibitem[CDK09]{CuDaKn09OENS}
G.~Cupini, B.~Dacorogna, and O.~Kneuss, \emph{On the equation {$\det\nabla
  u=f$} with no sign hypothesis}, Calc. Var. Partial Differential Equations
  \textbf{36} (2009), 251--283.

\bibitem[Dac08]{Daco08DMCV}
B.~Dacorogna, \emph{{Direct Methods in the Calculus of Variations}}, 2nd ed.,
  Applied Mathematical Sciences, vol.~78, Springer, 2008.

\bibitem[DM90]{DacMos90PDEJ}
B.~Dacorogna and J.~Moser, \emph{On a partial differential equation involving
  the {J}acobian determinant}, Ann. Inst. H. Poincar\'e Anal. Non Lin\'eaire
  \textbf{7} (1990), 1--26.

\bibitem[DM97]{DacMar97GETH}
B.~Dacorogna and P.~Marcellini, \emph{General existence theorems for
  hamilton-jacobi equations in the scalar and vectorial cases}, Acta Math.
  \textbf{178} (1997), 1--37.

\bibitem[DMS95]{DalMSb95}
G.~Dal~Maso and C.~Sbordone, \emph{Weak lower semicontinuity of polyconvex
  integrals: a borderline case}, Mathematische Zeitschrift \textbf{218} (1995),
  no.~1, 603--609.

\bibitem[DP12]{DePhil12WJ}
G.~De~Philippis, \emph{Weak notions of {J}acobian determinant and relaxation},
  ESAIM Control Optim. Calc. Var. \textbf{18} (2012), 181--207.

\bibitem[DS12]{DeLSze12HEFD}
C.~{De Lellis} and L.~{Sz\'{e}kelyhidi, Jr.}, \emph{The {$h$}-principle and the
  equations of fluid dynamics}, Bull. Amer. Math. Soc. (N.S.) \textbf{49}
  (2012), 347--375.

\bibitem[EM02]{EliMis02IH}
Y.~Eliashberg and N.~Mishachev, \emph{Introduction to the {$h$}-principle},
  Graduate Studies in Mathematics, vol.~48, American Mathematical Society,
  2002.

\bibitem[FLM05]{FoLeMa05WCLS}
I.~Fonseca, G.~Leoni, and J.~{Mal\'{y}}, \emph{Weak continuity and lower
  semicontinuity results for determinants}, Arch. Ration. Mech. Anal.
  \textbf{178} (2005), 411--448.

\bibitem[FM97]{FM97}
I.~Fonseca and J.~Mal{\`y}, \emph{Relaxation of multiple integrals below the
  growth exponent}, Ann. Inst. H. Poincar\'e Anal. Non Lin\'eaire \textbf{14}
  (1997), 309--338.

\bibitem[FM99]{FonMul99AQLS}
I.~Fonseca and S.~M{\"u}ller, \emph{{$\mathcal{A}$}-quasiconvexity, lower
  semicontinuity, and {Y}oung measures}, SIAM J. Math. Anal. \textbf{30}
  (1999), no.~6, 1355--1390.

\bibitem[FMP98]{FoMuPe98ACOE}
I.~Fonseca, S.~M{\"u}ller, and P.~Pedregal, \emph{Analysis of concentration and
  oscillation effects generated by gradients}, SIAM J. Math. Anal. \textbf{29}
  (1998), 736--756.

\bibitem[GMS98]{GMSvol1}
M.~Giaquinta, G.~Modica, and J.~Sou{\v{c}}ek, \emph{Cartesian {C}urrents in the
  {C}alculus of {V}ariations, {I}, {C}artesian {C}urrents}, Ergebnisse der
  Mathematik und ihrer Grenzgebiete, vol.~37, Springer, 1998.

\bibitem[Gro86]{Grom86PDR}
M.~Gromov, \emph{Partial differential relations}, Ergebnisse der Mathematik und
  ihrer Grenzgebiete, vol.~9, Springer, 1986.

\bibitem[Hen11]{Hen11SobHomJ0}
S.~Hencl, \emph{Sobolev homeomorphism with zero {J}acobian almost everywhere},
  J. Math. Pures Appl. \textbf{95} (2011), 444--458.

\bibitem[HMC10]{HeMo-Co10CAV&FRAC}
D.~Henao and C.~Mora-Corral, \emph{Invertibility and weak continuity of the
  determinant for the modelling of cavitation and fracture in nonlinear
  elasticity}, Arch. Ration. Mech. Anal. \textbf{197} (2010), no.~2, 619--655.

\bibitem[Kir03]{Kirc03RGM}
B.~Kirchheim, \emph{{Rigidity and Geometry of Microstructures}}, Lecture
  notes~16, {Max-Planck-Institut f\"{u}r Mathematik in den Naturwissenschaften,
  Leipzig}, 2003.

\bibitem[Kne12]{Kneu12OEPB}
O.~Kneuss, \emph{On the equation {$\det\nabla\phi=f$} prescribing {$\phi=0$} on
  the boundary}, Differential Integral Equations \textbf{25} (2012),
  1037--1052.

\bibitem[KP91]{KinPed91CYMG}
D.~Kinderlehrer and P.~Pedregal, \emph{Characterizations of {Y}oung measures
  generated by gradients}, Arch. Ration. Mech. Anal. \textbf{115} (1991),
  329--365.

\bibitem[KP94]{KinPed94GYMG}
\bysame, \emph{Gradient {Young} measures generated by sequences in {Sobolev}
  spaces}, J. Geom. Anal. \textbf{4} (1994), 59--90.

\bibitem[KR10]{KriRin10CGGY}
J.~Kristensen and F.~Rindler, \emph{Characterization of generalized gradient
  {Young} measures generated by sequences in {W$\sp{1,1}$} and {BV}}, Arch.
  Ration. Mech. Anal. \textbf{197} (2010), 539--598, Erratum: Vol. 203 (2012),
  693-700.

\bibitem[KRW13]{KoRiWi13OPYM}
K.~Koumatos, F.~Rindler, and E.~Wiedemann, \emph{Orientation-preserving {Young}
  measures}, submitted (August 2013), arXiv:1307.1007, 2013.

\bibitem[Mal93]{Maly93}
J.~Mal{\`y}, \emph{Weak lower semicontinuity of polyconvex integrals}, Proc.
  Roy. Soc. Edinburgh Sect. A \textbf{123} (1993), 681--691.

\bibitem[Mar86]{Marcellini86}
P.~Marcellini, \emph{On the definition and the lower semicontinuity of certain
  quasiconvex integrals}, Ann. Inst. H. Poincar\'e Anal. Non Lin\'eaire
  \textbf{3} (1986), 391--409.

\bibitem[{Mor}52]{Morr52QSMI}
C.~B. {Morrey, Jr.}, \emph{Quasiconvexity and the semicontinuity of multiple
  integrals}, Pacific J. Math. \textbf{2} (1952), 25--53.

\bibitem[Mos65]{Mose65OVEM}
J.~Moser, \emph{On the volume elements on a manifold}, Trans. Amer. Math. Soc.
  \textbf{120} (1965), 286--294. \MR{0182927 (32 \#409)}

\bibitem[MS95]{MuSp95EX}
S.~M{\"u}ller and S.~J. Spector, \emph{An existence theory for nonlinear
  elasticity that allows for cavitation}, Arch. Ration. Mech. Anal.
  \textbf{131} (1995), no.~1, 1--66.

\bibitem[M{\v{S}}03]{MulSve03CILM}
S.~M{\"u}ller and V.~{\v{S}}ver{\'a}k, \emph{Convex integration for {L}ipschitz
  mappings and counterexamples to regularity}, Ann. of Math. \textbf{157}
  (2003), 715--742.

\bibitem[M{\"u}l90]{Mull90DETD}
S.~M{\"u}ller, \emph{{${\rm Det}={\rm det}$}. a remark on the distributional
  determinant}, C. R. Acad. Sci. Paris S\'{e}r. I Math. \textbf{311} (1990),
  13--17.

\bibitem[M{\"u}l93]{Mull93SSDD}
\bysame, \emph{On the singular support of the distributional determinant}, Ann.
  Inst. H. Poincar\'e Anal. Non Lin\'eaire \textbf{10} (1993), 657--696.

\bibitem[M{\"u}l99]{Mull99VMMP}
S.~M{\"u}ller, \emph{Variational models for microstructure and phase
  transitions}, Calculus of variations and geometric evolution problems
  (Cetraro, 1996), Lecture Notes in Mathematics, vol. 1713, Springer, 1999,
  pp.~85--210.

\bibitem[Nas54]{Nash54C1IE}
J.~Nash, \emph{{$C\sp 1$} isometric imbeddings}, Ann. of Math. \textbf{60}
  (1954), 383--396.

\bibitem[Ped97]{Pedr97PMVP}
P.~Pedregal, \emph{{Parametrized Measures and Variational Principles}},
  Progress in Nonlinear Differential Equations and their Applications, vol.~30,
  Birkh\"auser, 1997.

\bibitem[Rin14]{Rind14LPCY}
F.~Rindler, \emph{A local proof for the characterization of {Young measures}
  generated by sequences in {BV}}, J. Funct. Anal. \textbf{266} (2014),
  6335--6371.

\bibitem[SS00]{SivSp00EX}
J.~Sivaloganathan and S.~J. Spector, \emph{On the existence of minimizers with
  prescribed singular points in nonlinear elasticity}, J. Elast. and Phys. Sc.
  Sol. \textbf{59} (2000), no.~1-3, 83--113.

\bibitem[{\v{S}}ve88]{Sver88RPDF}
V.~{\v{S}}ver{\'a}k, \emph{Regularity properties of deformations with finite
  energy}, Arch. Rational Mech. Anal. \textbf{100} (1988), 105--127.

\bibitem[SW12]{SzeWie12YMGI}
L.~{Sz\'{e}kelyhidi Jr.} and E.~Wiedemann, \emph{Young measures generated by
  ideal incompressible fluid flows}, Arch. Ration. Mech. Anal. \textbf{206}
  (2012), 333--366.

\bibitem[Y\'94]{Ye94PJSP}
D.~Y\'{e}, \emph{Prescribing the {Jacobian} determinant in {Sobolev} spaces},
  Ann. Inst. H. Poincar\'e Anal. Non Lin\'eaire \textbf{11} (1994), 275--296.

\bibitem[Yan96]{Yan96RemarksStability}
B.~Yan, \emph{Remarks on {$W^{1,p}$}-stability of the conformal set in higher
  dimensions}, Ann. Inst. H. Poincar\'e Anal. Non Lin\'eaire \textbf{13}
  (1996), 691--705.

\bibitem[Yan01]{Yan01LinearBVP}
\bysame, \emph{A linear boundary value problem for weakly quasiregular mappings
  in space}, Calc. Var. Partial Differential Equations \textbf{13} (2001),
  295--310.

\bibitem[Yan03]{Yan03Baire}
\bysame, \emph{A {B}aire's category method for the {D}irichlet problem of
  quasiregular mappings}, Trans. Amer. Math. Soc. \textbf{355} (2003), no.~12,
  4755--4765.

\end{thebibliography}

\providecommand{\bysame}{\leavevmode\hbox to3em{\hrulefill}\thinspace}
\providecommand{\MR}{\relax\ifhmode\unskip\space\fi MR }
\providecommand{\MRhref}[2]{%
  \href{http://www.ams.org/mathscinet-getitem?mr=#1}{#2}
}
\providecommand{\href}[2]{#2}

\end{document}